\font\elevenss=cmss11
\font\eightss=cmss8
\font\sixss=cmss8 at 6pt
\def\ss{\fam\ssfam \elevenss}%
\theoremstyle{plain}
\newtheorem{thm}{Theorem}[section]
\newtheorem{lem}[thm]{Lemma}
\newtheorem{pr}[thm]{Proposition}
\theoremstyle{definition}
\newtheorem{defn}[thm]{Definition}
\newtheorem{question}[thm]{Question}
\newtheorem{conj}[thm]{Conjecture}
\theoremstyle{remark}
\newtheorem*{unremark}{Remark}
\numberwithin{equation}{section}
\newcommand{\noproof}{\hfill \qedsymbol}
\def\disp{\displaystyle}
\def\grad{\nabla}
\def\ee{\epsilon}
\def\vv{{\bf v}}
\def\uu{{\bf u}}
\def\xx{{\bf x}}
\def\zz{{\bf z}}
\def\ww{{\bf w}}
\def\rr{{\bf r}}
\def\C{{\mathbb C}}
\def\Z{{\mathbb Z}}
\def\Q{{\mathbb Q}}
\def\R{{\mathbb R}}
\def\CP{{\mathbb C}{\mathbb P}}
\def\RP{{\mathbb R}{\mathbb P}}
\def\Arg{{\rm Arg\,}}
\def\SS{{\mathcal S}}
\def\SI{{\mathcal W}}
\def\FF{{\bf F}}
\def\GG{{\bf G}}
\def\sing{{\mathcal V}}
\def\dir{\mu}
\def\result{\mbox{\ss result}}
\def\rad{{\mathcal Rad}}
\def\romenumi{ \def\theenumi{\roman{enumi}}
\def\p@enumi{\theenumi} \def\labelenumi{(\@roman\c@enumi)}} 
\begin{document}

\title{Quantum random walk on the integer lattice: examples and phenomena}

\author{Andrew Bressler}
\address{Department of Mathematics, University of Pennsylvania, 
209 South 33rd Street, Philadelphia, PA 19104}
\email{andrewbr@math.upenn.edu}
\author{Torin Greenwood}
\address{Department of Mathematics, University of Pennsylvania, 
209 South 33rd Street, Philadelphia, PA 19104}
\email{toringr@sas.upenn.edu}
\author{Robin Pemantle}
\address{Department of Mathematics, University of Pennsylvania, 
209 South 33rd Street, Philadelphia, PA 19104}
\email{pemantle@math.upenn.edu}
\thanks{The third author was supported in part by NSF Grant \# DMS-06-3821}
\author{Marko Petkov\v sek}
\address{Department of Mathematics, University of Ljubljana,
Jadranska 19, SI-1000 Ljubljana, Slovenia}
\email{marko.petkovsek@fmf.uni-lj.si}

%    General info
\subjclass{Primary 05A15; Secondary 41A60, 82C10}
\date{\today}

\keywords{Rational function, generating function, shape}

\begin{abstract}
We apply results of~\cite{bressler-pemantle,BBBP} to compute
limiting probability profiles for various quantum walks 
in one and two dimensions.  Using analytical machinery we show
some features of the limit distribution that are not evident
in an empirical intensity plot of the time 10,000 distribution.
Some conjecutres are stated and computational techniques are 
discussed as well.
\end{abstract}

\maketitle

%\pagebreak

\section{Introduction}

The quantum walk on the integer lattice is a quantum analogue of
the discrete-time finite-range random walk (hence the abbreviation
QRW).  The process was first
constructed in the 1990's by~\cite{aharonov-davidovich-zagury}, 
with the idea of using such a process for quantum computing.  
A mathematical analysis of one particular one-dimensional QRW, 
called the Hadamard QRW, was put forward in 2001 by~\cite{QRW-one-dim}.  
Those interested in a survey of the present state of knowledge
may wish to consult~\cite{kempe05} as well as the more recent
expository works~\cite{kendon07,venegas-andraca,konno-survey}.
Among other properties, they showed that the motion of the quantum walker 
is ballistic: at time $n$, the location of the particle is typically 
found at distance $\theta (n)$ from the origin.  This contrasts
with the diffusive behavior of the classical random walk, which
is found at distance $\theta (\sqrt{n})$ from the origin.  A rigorous
and more comprehensive analysis via several methodologies  was 
given by~\cite{carteret-ismail-richmond}, and a thorough study 
of the general one-dimensional QRW with two chiralities appears 
in~\cite{bressler-pemantle}.  A number of papers on the subject
of the quantum walk appear in the physics literature in the
early 2000's.  

Studies of lattice quantum walks in more than one dimension 
are less numerous.  The first mathematical such study, of which
we are aware, is~\cite{konno04}, though some numerical results
are found in~\cite{MBSS02}.  Ballistic behavior is established
in~\cite{konno04}, along with the possibility of bound states.  
Further aspects of the limiting distribution are discussed
in~\cite{konno08}.  A rigorous treatment of the general lattice
QRW may be found in the preprint~\cite{BBBP}.  In particular, 
asymptotic formulae are given for the $n$-step transition amplitudes.
Drawing on this work, the present paper examines a number of examples
of QRWs in one and two dimensions.  We prove the existence of 
phenomena new to the QRW literature as well as resolving some
computational issues arising in the application of results
from~\cite{BBBP} to specific quantum walks.  

An outline of the remainder of the paper is as follows.  In 
Section~\ref{sec:defs} we define the QRW and summarize some known
results.  Section~\ref{sec:1-D} is concerned with one-dimensional QRWs.
We develop some theoretical results specific to one dimension, that
hold for an arbitrary number of chiralities.  We work an example
to illustrate the new phenomena as well as some techniques of
computation.  Section~\ref{sec:2-D} is concerned with examples 
in two dimensions.  In particular, we compute the bounding
curves for some examples previously examined in~\cite{BBBP}.

\section{Background} \label{sec:defs}

\subsection{Construction}

To specify a lattice quantum walk one needs the dimension $d \geq 1$,
the number of chiralities $k \geq d+1$, a sequence of $k$ vectors
$\vv^{(1)} , \ldots , \vv^{(k)} \in \Z^d$, and a unitary matrix $U$
of rank $k$.  The state space for the QRW is 
$$\Omega := L^2 \left ( \Z^d \times \{ 1 , \ldots , k \} \right ) \, .$$
A Hilbert space basis for $\Omega$ is the set of elementary states
$\delta_{\rr , j}$, as $\rr$ ranges over $\Z^d$ and $1 \leq j \leq k$;
we will also denote $\delta_{\rr , j}$ simply by $(\rr , j)$.
Let $I \otimes U$ denote the unitary operator on $\Omega$ whose
value on the elementary state $(\rr , j)$ is equal to 
$\sum_{i=1}^k U_{ij} (\rr , i)$.  Let $T$ denote the operator
whose action on the elementary states is given by
$T(\rr , j) = (\rr + \vv^{(j)} , j)$.  The QRW operator 
$\SS = \SS_{d,k,U,\{ \vv^{(j)} \}}$ is defined by
\begin{equation} \label{eq:S}
\SS := T \cdot \left ( I \otimes U \right ) \, .
\end{equation}

\subsection{Interpretation}

The elementary state $(\rr , j)$ is interpreted as a particle
known to be in location $\rr$ and having chirality $j$.  The
chirality is a state that can take $k$ values; chirality and
location are simultaneously observable.  Introduction of 
chirality to the model is necessary for the existence of
nontrivial translation-invariant unitary operators, as
was observed by~\cite{QRW-meyer}.  A single step of the
QRW consists of two parts: first, leave the location alone
but modify the state by applying $U$; then leave the state
alone and make a determinstic move by an increment, $\vv^{(j)}$ 
corresponding to the new chirality, $j$.  The QRW is translation
invariant, meaning that if $\sigma$ is any translation operator
$(\rr , j) \mapsto (\rr + \uu , j)$ then $\SS \circ \sigma = 
\sigma \circ \SS$.  The $n$-step operator is $\SS^n$.  Using
bracket notation, we denote the amplitude for finding the 
particle in chirality $j$ and location $\xx + \rr$ after $n$ 
steps, starting in chirality $i$ and location $\xx$, by
\begin{equation} \label{eq:a}
a(i,j,n,\rr) := \left \langle (\xx , i) \left | \SS^n \right |
   (\xx + \rr , j) \right \rangle \, .
\end{equation}
By translation invariance, this quantity is independent of $\xx$.
The squared modulus $|a(i,j,n,\rr)|^2$ is interpreted as the probability
of finding the particle in chirality $j$ and location $\xx + \rr$ 
after $n$ steps, starting in chirality $i$ and location $\xx$, if
a measurement is made.  Unlike the classical random walk, the quantum
random walk can be measured only at one time without disturbing the
process.  We may therefore study limit laws for the quantities
$a(i,j,n,\rr)$ but not joint distributions of these.

\subsection{Generating functions}

In what follows, we let $\xx$ denote the vector $(x_1 , \ldots , x_d)$.
Given a lattice QRW, for $1 \leq i,j \leq k$ we may define
a power series in $d+1$ variables via
\begin{equation} \label{eq:F}
F_{ij} (\xx , y) := \sum_{n \geq 0} \sum_{\rr \in \Z^d} 
   a(i,j,n,\rr) \xx^\rr y^n \, .
\end{equation}
Here and throughout, $\xx^\rr$ denotes the monomial power
$x_1^{r_1} \cdots x_d^{r_d}$.  We let $\FF$ denote the
generating matrix $(F_{ij})_{1 \leq i,j \leq k}$, which
is a $k \times k$ matrix with entries in the formal power
series ring in $d+1$ variables.  The following result
from~\cite{bressler-pemantle} is obtained via a straightforward use of
the transfer matrix method.  
\begin{lem}[\protect{\cite[Proposition~3.1]{bressler-pemantle}}] 
\label{lem:GF}
Let $M(\xx)$ denote the $k \times k$ diagonal matrix whose diagonal 
entries are $\xx^{\vv^{(1)}} , \ldots , \xx^{\vv^{(k)}}$.  Then
\begin{equation} \label{eq:matrix}
\FF(\xx,y) = \left ( I - y M(\xx) U \right )^{-1} \, .
\end{equation}
Consequently, there are polynomials $P_{ij}(\xx,y)$ such that
\begin{equation} \label{eq:polys}
F_{ij} = \frac{P_{ij}}{Q} 
\end{equation}
where $Q(\xx,y) := \det (I - y M(\xx) U)$.
\end{lem}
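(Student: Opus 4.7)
The plan is to apply the standard transfer matrix method. First, I would unwind the definition $\SS = T \cdot (I \otimes U)$ to compute the one-step action on elementary states:
\[
\SS(\xx, j) \;=\; T\!\sum_m U_{mj}(\xx, m) \;=\; \sum_{m=1}^k U_{mj}\,(\xx + \vv^{(m)}, m).
\]
Iterating $\SS^{n+1} = \SS \cdot \SS^n$ and reading off amplitudes yields a one-step linear recursion that expresses $a(i,j,n+1,\rr)$ as a sum of shifted values $a(i, m, n, \rr - \vv^{(\cdot)})$ weighted by entries of $U$, with the precise indexing forced by the bra-ket convention in~\eqref{eq:a}.

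The next step is to lift this recursion to a matrix identity for $\FF$. Multiplying through by $\xx^\rr y^{n+1}$ and summing over $n \geq 0$ and $\rr \in \Z^d$, each positional shift $\rr \mapsto \rr - \vv^{(\cdot)}$ becomes multiplication by $\xx^{\vv^{(\cdot)}}$, which is exactly the diagonal action of $M(\xx)$, while the factors of $U$ reassemble into matrix multiplication by $U$. The initial condition $a(i,j,0,\rr) = \delta_{ij}\delta_{\rr,\mathbf{0}}$ supplies the constant matrix $I$. The outcome is a matrix identity of the form $(I - y\,M(\xx)\,U)\,\FF = I$. Since $I - yM(\xx)U$ reduces to $I$ at $y = 0$, it is invertible in the ring of formal power series in $y$ with Laurent-polynomial coefficients in $\xx$, and solving yields~\eqref{eq:matrix}.

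Formula~\eqref{eq:polys} then follows immediately from Cramer's rule applied to $A := I - yM(\xx)U$: the $(i,j)$ entry of $A^{-1}$ is $(-1)^{i+j}\det(A^{[j,i]})/\det(A)$, where $A^{[j,i]}$ denotes $A$ with row $j$ and column $i$ removed. The entries of $A$ being polynomials in $y$ with (Laurent-)polynomial coefficients in $\xx$, taking $P_{ij}$ to be this signed minor and $Q := \det(I - yM(\xx)U)$ gives the claimed representation.

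The one non-routine point is the bookkeeping that pins down the precise form of the recursion — whether the matrix equation comes out as $\FF = I + yM(\xx)U\FF$, as $\FF = I + y\FF M(\xx)U$, or with $U^T$ in place of $U$. All of these paths lead to expressions of the same general shape, but only the correct convention matches the exact formula stated. I would settle this ambiguity by comparing the $n=1$ coefficient of $\FF$ against the explicit one-step dynamics computed above; once that is fixed, the rest of the argument is purely formal.
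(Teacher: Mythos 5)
Your proposal is correct and is essentially the argument the paper has in mind: the lemma is quoted without proof from~\cite{bressler-pemantle}, where it is obtained by exactly this transfer-matrix method (one-step recursion on amplitudes, summation against $\xx^{\rr}y^{n}$ to get $(I-yM(\xx)U)\FF=I$, inversion in the formal power series ring, and Cramer's rule for the rational form). Your device of fixing the indexing convention by matching the $y^{1}$ coefficient of $\FF$ against the explicit one-step dynamics is the right way to resolve the only delicate point.
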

\noproof

Let $\zz$ denote the vector $(\xx , y)$ and let
$$\sing := \{ \zz \in \C^{d+1} : Q(\zz) = 0 \}$$
denote the algebraic variety which is the common pole
of the generating functions $F_{ij}$.  Let $\sing_1 :=
\sing \cap T^{d+1}$ denote the intersection of the
singular variety $\sing$ with the unit torus 
$T^{d+1} := \{ |x_1| = \cdots = |x_d| = |y| = 1 \}$.  
An important map on $\sing$ is the logarithmic Gauss map
$\dir : \sing \to \CP^d$ defined by
\begin{equation} \label{eq:dir}
\dir (\zz) := \left ( z_1 \frac{\partial Q}{\partial z_1} \, : \,  
   \ldots \, : \, z_{d+1} \frac{\partial Q}{\partial z_{d+1}} \right ) \, .
\end{equation}
The map $\dir$ is defined only at points of $\sing$ where 
the gradient $\grad Q$ does not vanish.  In this paper we
will be concerned only with instances of QRW satisfying
\begin{equation} \label{eq:star}
\grad Q \mbox{ vanishes nowhere on } \sing_1 \, .
\end{equation}
This condition holds generically.  

\subsection{Known results} \label{ss:known}

It is shown in~\cite[Proposition~2.1]{BBBP} that the image
$\dir [ \sing_1 ]$ is contained in the real subspace 
$\RP^d \subseteq \CP^d$.  Also, under the hypothesis~\eqref{eq:star},
$\partial Q / \partial y$ cannot vanish on $\sing_1$, hence
we may interpret the range of $\dir$ as $\R^d \subseteq \RP^d$
via the identification $(x_1 : \cdots : x_d : y) \leftrightarrow
((x_1/y) , \ldots , (x_d/y))$.  In what follows, we draw heavily
on two results from~\cite{BBBP}.  
\begin{thm}[shape theorem \protect{\cite[Theorem~4.2]{BBBP}}] 
\label{th:shape}
Assume~\eqref{eq:star} and let $\GG \subseteq \R^d$ be the
closure of the image of $\dir$ on $\sing_1$.  If $K$ is any compact
subset of $\GG^c$, then
$$a(i,j,n,\rr) = O(e^{-cn})$$
for some $c = c(K) > 0$, uniformly as $\rr / n$ varies over $K$.
\end{thm}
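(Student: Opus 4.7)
The plan is to represent $a(i,j,n,\rr)$ by Cauchy's integral formula applied to~\eqref{eq:polys}, and then deform the contour to one on which the integrand is exponentially small in $n$. Unitarity of $\SS$ forces $|a(i,j,n,\rr)| \leq 1$, so each $F_{ij}$ is holomorphic on the open unit polydisc, and for any polytorus $T$ strictly inside we may write
\begin{equation*}
a(i,j,n,\rr) \;=\; \frac{1}{(2\pi i)^{d+1}} \oint_T F_{ij}(\xx,y) \, \xx^{-\rr} y^{-n} \, \frac{d\xx \, dy}{\xx \, y}.
\end{equation*}
Setting $\alpha := \rr/n$ and $h_\alpha(\xx,y) := \alpha_1 \log x_1 + \cdots + \alpha_d \log x_d + \log y$, the reciprocal monomial becomes $\exp(-n \, h_\alpha)$; on a polytorus with log-moduli $(t,s) \in \R^{d+1}$ (i.e.\ $|x_j| = e^{t_j}$, $|y| = e^s$) its modulus equals $e^{-n(\alpha \cdot t + s)}$.

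The goal is therefore to produce, for each $\alpha$ in the compact set $K \subseteq \GG^c$, a polytorus $T_\alpha$ with log-moduli $(t,s)$ satisfying $\alpha \cdot t + s \geq c(K) > 0$, disjoint from $\sing$ and homotopic to $T$ within $(\C^*)^{d+1} \setminus \sing$. The set of $(t,s)$ for which the corresponding polytorus meets $\sing$ is the \emph{amoeba} of $Q$. The standard amoeba interpretation of the logarithmic Gauss map (implicit in~\cite{BBBP}) identifies, at each $\zz_0 \in \sing_1$ where $\grad Q(\zz_0) \neq 0$, the direction $\dir(\zz_0) \in \R^d$ with the outward log-normal to this amoeba at $(0,0)$. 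Thus $\GG$ is precisely the closure of the set of outward normal directions to the amoeba at the origin, and for $\alpha \notin \GG$ one can move from $(0,0)$ along some direction $(t,s)$ with $\alpha \cdot t + s > 0$ into the amoeba complement and stay there for a positive distance.

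For uniformity over $K$, I would cover $K$ by finitely many open neighborhoods $U_1, \dots, U_N$ such that on each $U_i$ a single direction $(t^{(i)}, s^{(i)})$ achieves $\alpha \cdot t^{(i)} + s^{(i)} \geq c(K) > 0$ simultaneously for every $\alpha \in U_i$, with the associated polytorus $T^{(i)}$ sitting in the amoeba complement. Since each $T^{(i)}$ is compact and avoids $\sing$, $|F_{ij}|$ is bounded there; the contour deformation from $T$ to $T^{(i)}$ is legitimate because the homotopy can be taken inside the amoeba complement (on which $F_{ij}$ is holomorphic), and it produces the uniform bound $|a(i,j,n,\rr)| \leq C(K) e^{-c(K) n}$. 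The principal obstacle is the geometric identification of $\GG$ with the amoeba-normal directions at the origin, together with the existence of the global (not merely infinitesimal) contour deformation; both are handed to us by~\cite{BBBP} together with the nondegeneracy hypothesis~\eqref{eq:star}, after which the argument is a standard contour shift.
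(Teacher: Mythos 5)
The paper does not actually prove Theorem~\ref{th:shape}: it is imported verbatim from \cite[Theorem~4.2]{BBBP} and marked with no proof, so there is no internal argument to compare yours against; judged on its own terms, your outline follows the expected route (Cauchy integral plus a shift of the polytorus into the complement of the amoeba of $Q$), but it has a genuine gap at its center. The whole content of the theorem is the geometric claim you dispatch in one sentence: that for $\alpha\notin\GG$ one can move off the unit torus to log-radii $(t,s)$ with $\alpha\cdot t+s\geq c>0$ while staying in the amoeba complement, and moreover in the \emph{same} complement component $B$ as the domain of convergence of~\eqref{eq:F} (the convex component containing $\{t=0,\ s<0\}$). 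The origin is a highly degenerate boundary point of the amoeba --- the entire $d$-dimensional manifold $\sing_1$ is collapsed onto it --- so there is no single ``outward log-normal to the amoeba at $(0,0)$,'' and identifying $\dir(\zz_0)$ with such a normal is not meaningful as stated. What is actually needed is that the normal cone at the origin of the convex component $B$ is contained in the cone over $\GG$, i.e.\ every supporting hyperplane of $B$ at the origin has direction that is a limit of Gauss directions $\dir(\zz)$ with $\zz\in\sing_1$; combined with convexity of $B$ this gives a point of $B$ with $\alpha\cdot t+s>0$ whenever $\alpha\notin\GG$. That normal-cone statement is not a formal consequence of the definition of $\dir$; it is essentially the substance of \cite[Theorem~4.2]{BBBP} itself, so ``handing it to us by \cite{BBBP}'' makes the proposal circular. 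Note also that ``into the amoeba complement'' alone is not enough: if the shifted polytorus lands in a different complement component, the Cauchy integral there computes coefficients of a different Laurent expansion and no homotopy within $(\C^*)^{d+1}\setminus\sing$ back to the original torus exists.

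Two smaller inaccuracies. The steps $\vv^{(j)}$ may have negative coordinates (as in the running example of Section~\ref{sec:1-D}), so $F_{ij}$ is a Laurent series in $\xx$ and is not holomorphic on the unit polydisc; the correct domain is the Reinhardt domain over $B$, e.g.\ a neighborhood of $T^d\times\{|y|<1\}$. And once you take $(t,s)\in B$ you do not need any contour homotopy: the Laurent coefficient formula is valid on every polytorus over $B$, after which your finite-subcover argument for uniformity over compact $K\subseteq\GG^c$ and the boundedness of $F_{ij}$ on a compact polytorus avoiding $\sing$ do finish the estimate. The missing ingredient is precisely the normal-cone (equivalently, support-function) inequality for $B$ at the origin in terms of $\GG$.
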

\noproof

In other words, under ballistic rescaling, the region of
non-exponential decay or {\em feasible region} is contained
in $\GG$.  The converse, and much more, is provided by the
second result, also from the same theorem.  For $\zz \in \sing_1$, 
let $\kappa (\zz)$ denote the curvature of the real 
hypersurface $-i \log \sing_1 \subseteq \R^{d+1}$ at
the point $\log \zz$, where $\log$ is applied to vectors
coordinatewise and manifolds pointwise.  
\begin{thm}[asymptotics in the feasible region] \label{th:asym}
Suppose $Q$ satisfies~\eqref{eq:star}.  For $\rr \in \GG$, 
let $Z(\rr)$ denote the set $\mu^{-1} (\rr)$ of pre-images 
in $\sing_1$ of the projective point $\rr$ under $\dir$.  
If $\kappa (\zz) \neq 0$ for all $\zz \in Z(\rr)$, then 
\begin{equation} \label{eq:asym}
a(i,j,n,\rr) = n^{-d/2} \left [ \sum_{z \in Z(\rr)} 
   \frac{P_{ij} (\zz)}{|\grad_{\rm \log} Q (\zz)|}
   |\kappa (\zz)|^{-1/2} e^{i \omega (\rr,n)} \right ] 
   + O \left ( n^{-(d+1)/2} \right ) 
\end{equation}
where the argument $\omega (\rr , n)$ is given by $- \rr \cdot \Arg (\zz)
+ i \pi \tau (\zz) / 4$ and $\tau (\zz)$ is the index of the quadratic
form defining the curvature at the point $(1/i) \log \zz \in 
(1/i) \log \sing_1$. 
\end{thm}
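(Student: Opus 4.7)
The plan is to view $a(i,j,n,\rr)$ as a Cauchy coefficient of the rational generating function $F_{ij}=P_{ij}/Q$ from Lemma~\ref{lem:GF}, and to extract asymptotics by the multivariate saddle-point machinery of analytic combinatorics in several variables. Concretely, I would start from
\begin{equation*}
a(i,j,n,\rr) \;=\; \frac{1}{(2\pi i)^{d+1}} \int_T \frac{P_{ij}(\xx,y)}{Q(\xx,y)}\,\xx^{-\rr-\mathbf 1} y^{-n-1}\,d\xx\,dy,
\end{equation*}
where $T$ is a polytorus slightly inside $T^{d+1}$. Because unitarity forces $\sing_1\neq\emptyset$, we cannot push $T$ past $T^{d+1}$ freely; instead we deform it to lie on $T^{d+1}$ everywhere away from small neighborhoods of the critical points, and push it strictly beyond $T^{d+1}$ elsewhere (this is possible since $\grad Q\neq 0$ on $\sing_1$ by~\eqref{eq:star}, so $\sing_1$ is a smooth $d$-manifold with a well-defined outward normal direction). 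The points of $\sing_1$ that contribute to the asymptotics are precisely those where the phase $\rr\cdot\log\xx+n\log y$ is stationary along $\sing_1$; this stationarity equation is equivalent to $(r_1:\cdots:r_d:n)=\dir(\zz)$, exhibiting the summation set as $Z(\rr)=\dir^{-1}(\rr)$.

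Next I would localize near each $\zz\in Z(\rr)$ using a smooth partition of unity and reduce the $(d+1)$-dimensional integral to a $d$-dimensional integral over $\sing_1$. Near a smooth point of $\sing_1$ one parametrizes $y$ as a function of $\xx$ via $Q=0$; a residue in $y$ gives
\begin{equation*}
a(i,j,n,\rr) \sim \frac{1}{(2\pi i)^{d}} \sum_{\zz\in Z(\rr)} \int_{U_\zz} \frac{P_{ij}(\xx,y(\xx))}{y(\xx)\,\partial_y Q(\xx,y(\xx))}\,\xx^{-\rr-\mathbf 1} y(\xx)^{-n}\,d\xx,
\end{equation*}
where $U_\zz$ is a small neighborhood of $\zz$ on $T^d$. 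Writing $x_\ell = z_\ell e^{i\theta_\ell}$ converts this to an oscillatory integral with phase $n\Phi(\theta)$, where $\Phi$ has a stationary point of Morse type at $\theta=0$, and the Hessian of $\Phi$ there is, by a direct calculation, proportional to the second fundamental form of $-i\log\sing_1$ at $\log\zz$, with determinant equal to $\kappa(\zz)$ up to the factor $|\grad_{\log}Q(\zz)|$.

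Finally I would invoke the standard stationary-phase formula: each critical point contributes
\begin{equation*}
(2\pi n)^{-d/2}\,\frac{P_{ij}(\zz)}{|\grad_{\log}Q(\zz)|}\,|\kappa(\zz)|^{-1/2} e^{i\omega(\rr,n)},
\end{equation*}
with $e^{-i\rr\cdot\Arg\zz}$ from the evaluation of $\xx^{-\rr}y^{-n}$ at $\zz$ and $e^{i\pi\tau(\zz)/4}$ from the signature of the Hessian. Summing over $Z(\rr)$ yields the leading term of~\eqref{eq:asym}, and the error $O(n^{-(d+1)/2})$ is the standard next-order remainder in Morse stationary phase when the amplitude and phase are smooth at the critical points.

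The main obstacle I foresee is step two: the geometric identification of the Hessian of the stationary-phase integrand on $\sing_1$ with the curvature $\kappa(\zz)$ of the real hypersurface $-i\log\sing_1$. This requires bookkeeping of the passage between the implicit description $Q=0$, the parametrization by $\theta$, and the intrinsic geometry of the log-amoeba; the normalizing factor $|\grad_{\log}Q|$ arises precisely from this change of description. The contour deformation is the other nontrivial point, but under~\eqref{eq:star} it reduces to a routine application of the surgery techniques used in the proof of Theorem~4.2 of~\cite{BBBP}, which we are entitled to cite.
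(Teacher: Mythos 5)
This theorem is not proved in the paper at all: it is imported verbatim from \cite[Theorem~4.2]{BBBP} and stated with no proof, so there is no internal argument to compare against. Your sketch reproduces the strategy of that reference correctly in outline --- Cauchy integral for the coefficient, residue in $y$ reducing to oscillatory integrals over the branches of $\sing_1$ above $T^d$ (note that for a QRW all poles in $y$ lie on the unit circle when $\xx \in T^d$, which is why the reduction is to a purely oscillatory phase rather than one with exponential decay), stationary points characterized by $\dir(\zz) = \rr$, and stationary phase with the Hessian determinant identified with $\kappa(\zz)$ up to the normalization $|\grad_{\rm log}Q(\zz)|$ and the signature producing $e^{i\pi\tau(\zz)/4}$. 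The one caveat: your closing appeal to ``the surgery techniques used in the proof of Theorem~4.2 of \cite{BBBP}'' is circular if the goal is an independent proof, since that is exactly the theorem being established; the two steps you flag as obstacles (the contour deformation and the Hessian--curvature bookkeeping) are precisely the content of the cited proof, so as a self-contained argument your proposal remains a sketch, while as a summary of the argument the present paper chooses simply to cite, it is accurate.
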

\noproof

\section{One-dimensional QRW with three or more chiralities}
\label{sec:1-D}

\subsection{Hadamard QRW}

The Hadamard QRW is the one-dimensional QRW with two chiralities
that is defined in~\cite{aharonov-davidovich-zagury} and analyzed 
in~\cite{QRW-one-dim} and~\cite{carteret-ismail-richmond}.  It has
unitary matrix $U = \disp{\frac{1}{\sqrt{2}} \left [ \begin{array}{cc}  
1 & 1 \\ 1 & -1 \end{array} \right ]}$, which is a constant multiple
of a Hadamard matrix, these being matrices whose entries are all $\pm 1$.
Applying an affine map to the state space, we may assume without
loss of generality that the steps are~0 and~1.  Up to a rapidly 
oscillating factor due to a phase difference in two summands
in the amplitude, it is shown in these early works that 
the rescaled amplitudes $n^{1/2} a(i , j , n , n \theta)$
converge to a profile $f(\theta)$ supported on the interval
$J := \disp{ \left [ \frac{1}{2} - \frac{\sqrt{2}}{4} , 
\frac{1}{2} + \frac{\sqrt{2}}{4} \right ]}$.  The function
$f$ is continuous on the interior of $J$ and blows up like
$|\theta - \theta_0|^{-1/2}$ when $\theta_0$ is an endpoint
of $J$.  These results are extended in~\cite{bressler-pemantle}
to arbitrary unitary matrices.  The limiting profiles are
all qualitatively similar; a plot for the Hadamard QRW 
is shown in figure~\ref{fig:had-1}, with the upper envelope
showing what happens when the phases of the summands line up. 
\begin{figure}[ht]
\centering
\includegraphics[scale=0.40]{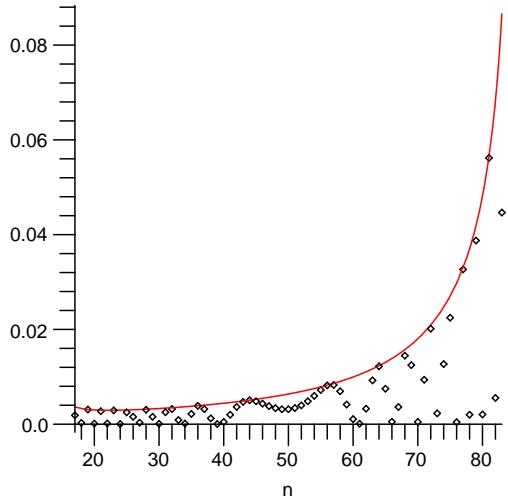}
\caption{probability profile for the 1-dimensional Hadamard QRW:
red line is upper envelope, computed by aligning the phases of 
the summands, while black points are actual squared magnitudes}
\label{fig:had-1}
\end{figure}

\subsection{Experimental data with three or more chiralities}

When the number of chiralities is allowed to exceed two, new
phenomena emerge.  The possibility of a bound state arises.
This means that for some fixed location $x$, the amplitude
$a(i,j,n,x)$ does not go to zero as $n \to \infty$.  This
was first shown to occur in~\cite{BCA03,konno-three-state}.  From
a generating function viewpoint, bound states occur when the
denominator $Q$ of the generating function factors.  The
occurrence of bound states appears to be a non-generic phenomenon.

In 2007, two freshman undergraduates, Torin Greenwood and Rajarshi Das,
investigated one-dimensional quantum walks with three and
four chiralities and more general choices of $U$ and $\{ \vv^{(j)} \}$.
Their empirical findings are catalogued at
\begin{verbatim}
http://www.math.upenn.edu/~pemantle/Summer2007/First_Page.html .
\end{verbatim}
The probability profile shown in figure~\ref{fig:n=1000}
is typical of what they found and is the basis for an example 
running throughout this section.  In this example, 
\begin{equation} \label{eq:U}
U = \frac{1}{27} \left [ \begin{array}{cccc}  
   17 & 6 & 20 & -2 \\ -20  & 12 & 13 & -12 \\ 
   -2 & -15 & 4 & -22 \\ -6 & -18 & 12 & 15 \end{array} \right ]
\end{equation}
and $\vv^{(j)} = -1, 0, 1, 2$ for $j=1, 2, 3, 4$ respectively.
The profile shown in the figure is a plot of $|a (1, 1, 1000 , x)|^2$
against $x$ for integers $x$ in the interval $[-1000,2000]$.
\begin{figure}[ht]
\centering
\includegraphics[scale=0.65]{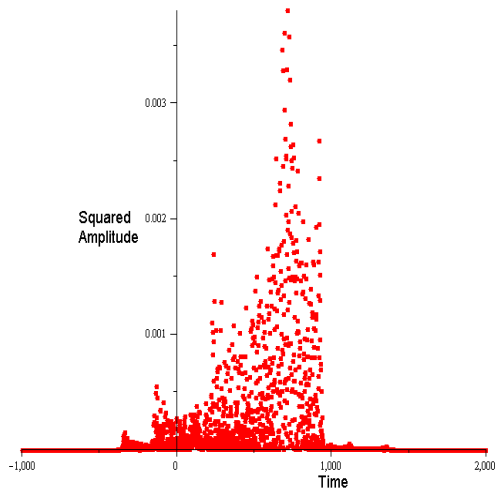}
\caption{probability profile for a four-chirality QRW in one dimension}
\label{fig:n=1000}
\end{figure}
The values were computed exactly by recursion and then plotted.
The most obvious new feature is the existence of a number of
peaks in the interior of the feasible region.  The phase factor
is somewhat more chaotic as well, which turns out to be due to a
greater number of summands in the amplitude function.  Our aim is 
to use the theory described in Section~\ref{sec:defs} to establish
the locations of these peaks, that is to say, the values of
$\theta$ for which $n^{1/2} a(i, j, n, x)$ become unbounded
for $x$ sufficiently near $n \theta$.  

\subsection{Results and conjectures}

The results of Section~\ref{sec:defs} may be summarized informally
in the case of one-dimensional QRW as follows.
Provided the quantities $\grad Q$ and $\kappa$ do 
not vanish for the points $z$ associated with a direction $\rr$,
then the amplitude profile will be a the sum of terms whose
phase factors may be somewhat chaotic, but whose magnitudes
are proportional to $\kappa^{-1/2} / |\grad_{\rm log} Q|$.  
In practice the magnitude of the amplitude will vary between 
zero and the sum of the magnitudes of the pieces, depending on 
the behavior of the phase terms.  In the two-chirality case,
with only two summands, it is easy to identify the picture with
the theoretical result.  However, in the multi-chirality case,
the empirical results in figure~\ref{fig:n=1000} are not 
easily rectified with the theoretical result, firstly because
the theoretical result is not trivial to compute, and secondly
because the computation appears at first to be at odds with the
data.  In the remainder of Section~\ref{sec:1-D}, we show how the
theoretical computations may be executed in a computer algebra system,
and then recify these with the data in figure~\ref{fig:n=1000}.
The first step is to verify some of the hypotheses of 
Theorems~\ref{th:shape}--\ref{th:asym}.  The second step,
reconciling the theory and the data, will be done in 
Section~\ref{ss:computations}.

\begin{pr} \label{pr:1-D}
Let $Q(\xx,y)$ be the denominator of the generating function 
for any QRW in any dimension that satisfies the smoothness 
hypothesis~\eqref{eq:star}.  Let $\pi$ be the projection 
from $\sing_1$ to the $d$-torus $T^d$ that forgets the last
coordinate.  Then the following properties hold.
\begin{enumerate} \romenumi
\item $\partial Q / \partial y$ does not vanish on $\sing_1$;
\item $\sing_1$ is a compact $d$-manifold;
\item $\pi : \sing_1 \to T^d$ is smooth and nonsingular;
\item In fact, $\sing_1$ is homeomorphic to a union of some number
$s$ of $d$-tori, each mapping smoothly to $T^d$ under $\pi$
and covering $T^d$ some number $n_j$ times for $1 \leq j \leq s$.
\item $\kappa : \sing_1 \to \R$ vanishes exactly when the determinant
of the Jacobian of the map $\dir$ vanishes.
\item $\kappa$ vanishes on the boundary $\partial \dir [ \sing_1 ]$
of the range of $\mu$.
\end{enumerate}
\end{pr}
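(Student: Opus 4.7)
My plan is to exploit unitarity on the torus. For $\xx \in T^d$ the diagonal matrix $M(\xx)$ is unitary, so $A(\xx) := M(\xx)U$ is unitary, and
\[
Q(\xx, y) = \det(I - yA(\xx)) = \prod_{j=1}^k (1 - y\lambda_j(\xx))
\]
with the eigenvalues $\lambda_j(\xx)$ on the unit circle. Hence $(\xx, y) \in \sing_1$ iff $y = \overline{\lambda_j(\xx)}$ for some $j$, so $\sing_1$ is locally a union of ``eigenvalue graphs'' over $T^d$. I would use this description to drive (i)--(iv) and invoke classical differential geometry for (v) and (vi).

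For (i), set $B(\zz) := I - yA(\xx)$, so $Q = \det B$. By Jacobi's formula every partial of $Q$ at any $\zz^* \in \sing_1$ is a trace of $\mathrm{adj}(B(\zz^*))$ against a partial of $B$, so hypothesis~\eqref{eq:star} forces $\mathrm{adj}(B(\zz^*)) \neq 0$, hence $\mathrm{rank}\, B(\zz^*) \geq k-1$ (adjugates vanish on matrices of rank $\leq k-2$). Since $A(\xx^*)$ is unitary, hence normal and diagonalizable, algebraic and geometric multiplicities of its eigenvalues agree, so $1/y^*$ must be a \emph{simple} eigenvalue; factoring $Q(\xx^*, y) = (1 - y/y^*)\,g(\xx^*,y)$ with $g(\xx^*,y^*)\neq 0$ then yields $\partial_y Q(\zz^*)\neq 0$. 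I expect this multiplicity argument to be the crux of the proposition, since it is where unitarity is essential; once it is in hand, parts (ii) and (iii) come immediately, as the implicit function theorem applied via (i) exhibits $\sing_1$ locally as a smooth graph $y=g(\xx)$, so $\sing_1$ is a smooth $d$-manifold (compact since closed in the compact torus $T^{d+1}$), and $\pi$ is locally the identity in $\xx$, hence smooth with invertible Jacobian.

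For (iv), each connected component $C \subseteq \sing_1$ is a compact $d$-manifold and $\pi|_C : C \to T^d$ is a proper local diffeomorphism, hence a finite covering map of some degree $n_j$. Since $T^d = K(\Z^d,1)$ and every finite-index subgroup $H \leq \Z^d$ is itself isomorphic to $\Z^d$, the covering $C$ is homeomorphic to $\R^d/H \cong T^d$.

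For (v), I would identify $\dir$ with the ordinary Gauss map of the real hypersurface $-i\log \sing_1 \subseteq \R^{d+1}$ via the diffeomorphism $\zz \mapsto -i\log \zz$ and the identification $\R^d \subseteq \RP^d$ from Section~\ref{ss:known}; the classical formula that the Gauss--Kronecker curvature equals, up to a nonvanishing scalar, the Jacobian determinant of the Gauss map then delivers the claim. For (vi), the inverse function theorem shows that each regular point $\zz$ of $\dir$ has a neighborhood mapped diffeomorphically into $\dir[\sing_1]$, placing $\dir(\zz)$ in its interior; contrapositively, every preimage of a point $\rr \in \partial \dir[\sing_1]$ is a critical point of $\dir$, and (v) then forces $\kappa = 0$ at each such preimage.
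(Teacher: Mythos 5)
Your proposal is correct, and it differs from the paper mainly in being self-contained where the paper leans on citations. For (i)--(ii) the paper simply invokes \cite[Proposition~2.2]{BBBP}; you instead prove (i) directly from unitarity: writing $Q=\det(I-yM(\xx)U)$ with $M(\xx)U$ unitary for $\xx\in T^d$, Jacobi's formula plus hypothesis~\eqref{eq:star} forces the adjugate to be nonzero, hence rank exactly $k-1$ on $\sing_1$, and normality upgrades geometric to algebraic simplicity of the eigenvalue $1/y$, giving $\partial Q/\partial y\neq 0$. That argument is sound, and it isolates exactly where unitarity enters. One point worth making explicit in your write-up of (ii): on the real torus, $Q=0$ is a priori two real equations, so a naive real implicit-function-theorem argument would predict codimension two; it is precisely your observation that all roots $y$ of $Q(\xx,\cdot)$ lie on the unit circle when $\xx\in T^d$ (so the holomorphic graph $y=g(\xx)$ automatically stays in $T^{d+1}$) that yields a $d$-dimensional manifold, and this should be stated rather than left implicit in the phrase ``eigenvalue graphs.'' For (iv) the paper appeals to the classification of compact $d$-manifolds covering $T^d$, while you give the underlying covering-space argument (proper local diffeomorphism onto a connected base is a finite cover; finite-index subgroups of $\Z^d$ are again rank-$d$ lattices, so each component is $\R^d/H\cong T^d$) --- same content, spelled out. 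Parts (v) and (vi) coincide with the paper's argument: identify $\dir$ with the Gauss map of $-i\log\sing_1$ in the affine chart (legitimate by (i)), so $\kappa=0$ exactly where the Jacobian of $\dir$ degenerates, and any point of a boundaryless manifold mapping to a boundary point of the image must be critical.
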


\begin{proof}
The first two conclusions are shown as~\cite[Proposition~2.2]{BBBP}.
The map $\pi$ is smooth on $T^{d+1}$, hence on $\sing_1$, and
nonsingularity follows from the nonvanishing of the partial
derivative with respect to $y$.  The fourth conclusion follows from 
the classification of compact $d$-manifolds covering the $d$-torus. 
%%%Dennis and Herman verified orally that this is correct.  It follows
%%%from the vanishing of the higher homotopy groups of the torus and 
%%%the fact that any map lifts to a map of covering spaces.
For the fifth conclusion, recall that the Gauss-Kronecker 
curvature of a real  hypersurface is defined as the determinant 
of the Jacobian of the map taking $p$ to the unit normal at $p$.
We have identified projective space with the slice $z_{d+1} = 1$
rather than with the slice $|\zz| = 1$, but these are locally
diffeomorphic, so the Jacobian of $\mu$ still vanishes exactly
when $\kappa$ vanishes.  Finally, if an interior point of a 
manifold maps to a boundary point of the image of the manifold 
under a smooth map, then the Jacobian vanishes there, hence the
last conclusion follows from the fifth.
\end{proof}

An empirical fact is that in all of the several dozen quantum
random walks we have investigated, the number of components
of $\sing_1$ and the degrees of the map $\pi$ on each component
depend on the dimension $d$ and the vector of chiralities, 
but not on the unitary matrix $U$.
\begin{conj}
If $d, k, \vv^{(1)} , \ldots , \vv^{(k)}$ are fixed and
$U$ varies over unitary matrices, then the number of components
of $\sing$ and the degrees of the map $\pi$ on each component
are constant, except for a set of matrices of positive co-dimension. 
\end{conj}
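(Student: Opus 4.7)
The plan is to exploit the polynomial dependence of $Q(\xx, y) = \det(I - yM(\xx)U)$ on the entries of $U$, together with compactness of the torus, to realize the family $\{\sing_1(U)\}$ as a smooth fiber bundle over an open dense subset of $U(k)$, and then to invoke connectedness of $U(k)$.

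First I would pin down the bad set. Let $B \subseteq U(k)$ consist of those $U$ for which hypothesis~\eqref{eq:star} fails, i.e., for which some point of $\sing_1(U)$ is also a zero of $\grad Q$. The existence of a common zero of $Q$ and $\grad Q$ on $T^{d+1}$ is a system of polynomial equalities in the real and imaginary parts of the entries of $U$ and of $(\xx, y)$; Tarski--Seidenberg implies that its projection $B$ is semi-algebraic. The remark after~\eqref{eq:star} that the smoothness hypothesis holds generically, corroborated by the explicit four-chirality example~\eqref{eq:U} appearing later in the paper, shows that $B$ is not all of $U(k)$. Hence $B$ lies in a proper real algebraic subvariety and has positive real codimension.

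Next I would assemble the total family
\[
\mathcal{E} := \{(U, \xx, y) \in (U(k) \setminus B) \times T^{d+1} : Q_U(\xx, y) = 0\}
\]
with projection $\rho : \mathcal{E} \to U(k) \setminus B$, whose fiber over $U$ is $\sing_1(U)$. Proposition~\ref{pr:1-D}(i)--(iii) guarantees that each fiber is a smooth compact $d$-manifold submerging onto $T^d$ via $\pi$. Because $\grad_{\xx,y} Q$ does not vanish anywhere on $\mathcal{E}$, the implicit function theorem makes $\rho$ itself a submersion, and $\rho$ is proper because its fibers are closed subsets of the compact torus $T^{d+1}$. Ehresmann's fibration theorem then upgrades $\rho$ to a smooth locally trivial fiber bundle, and the trivializations can be chosen to intertwine $\pi$ with projection from a product model $T^d \times F \to T^d$ on each local chart. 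In particular, the number of connected components of $\sing_1(U)$ and the covering degree of $\pi$ restricted to each component are locally constant integer-valued functions of $U$. Since $U(k)$ is connected and $B$ has positive codimension, $U(k) \setminus B$ is connected, forcing these invariants to be globally constant on it.

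The step I expect to be the main obstacle is the first one, namely verifying that $B$ really has positive codimension rather than being all of $U(k)$. A rigorous argument will need either an explicit smooth witness $U_0$ valid for every legal $(d, k, \{\vv^{(j)}\})$, or a resultant-based proof that the joint vanishing of $Q$ and $\grad Q$ on $T^{d+1}$ imposes a nontrivial polynomial constraint on the matrix entries. A secondary subtlety, needed to make sense of ``degree on each component,'' is that the partition of $\sing_1(U)$ into components varies continuously with $U$; this follows from local triviality of $\rho$, since any fiber-preserving diffeomorphism bijects connected components and preserves the cardinality of $\pi^{-1}$ over a generic point of $T^d$.
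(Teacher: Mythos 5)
The statement you are proving is presented in the paper as an open conjecture --- there is no proof of it in the paper, and the remark immediately following it pinpoints exactly where the difficulty lies. Your proposal does not close that difficulty. The fatal step is the sentence ``Since $U(k)$ is connected and $B$ has positive codimension, $U(k)\setminus B$ is connected.'' For real semialgebraic sets, positive codimension means codimension at least one, and a codimension-one wall can perfectly well disconnect a connected Lie group (remove a hypersurface such as $\{\det(\Real\,U)=0\}$-type loci, or, for an elementary picture, an equator from a sphere). Connectedness of the complement would require codimension at least two, and nothing in your argument, or in the paper, establishes that. The degenerate set $B$ is known to be nonempty --- the paper cites bound-state examples where $\sing_1$ fails to be smooth --- and the entire content of the conjecture is whether the topological invariants of $\sing_1$ agree on the two sides of such a wall. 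Your Ehresmann argument (which is essentially sound, modulo the technical point below) only gives local constancy of the invariants on $U(k)\setminus B$, hence constancy on each connected component of the good set. If two components carried different invariant values, the exceptional set in the conjecture would have to contain an open set of matrices, and the conjecture would be false; so the conjecture is exactly equivalent to the cross-wall comparison you have assumed. This is what the paper's remark means by ``if the conjecture fails then a transition occurs at which $\sing_1$ is not smooth'': the authors know the invariants are locally constant off $B$ and identify the unresolved issue as what happens when one must cross $B$.

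Two further points. First, you yourself flag that $B$ being a proper subset of $U(k)$ for every choice of $(d,k,\vv^{(1)},\ldots,\vv^{(k)})$ is not established; the paper's assertion that \eqref{eq:star} ``holds generically'' is itself offered without proof, so resting on it is circular if the goal is a rigorous argument. Second, a technical wrinkle in your bundle construction: on the unit torus the real and imaginary parts of $Q$ are not independent equations (note that $\sing_1$ has real codimension one in $T^{d+1}$, not two), so you cannot simply cite nonvanishing of the complex gradient to conclude that $\mathcal{E}$ is a smooth manifold and $\rho$ a submersion; you need the torus-specific argument of the kind used for \cite[Proposition~2.2]{BBBP} carried out uniformly in the parameter $U$. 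That part is repairable; the connectedness step is not, and with it the proposal stops short of proving the conjecture.
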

\begin{unremark}
The unitary group is connected, so if the conjecture fails then
a transition occurs at which $\sing_1$ is not smooth.  We know
that this happens, resulting in a bound state~\cite{konno-three-state},
however in the three-chirality case, the degeneracy does not seem 
to mark a transition in the topology of $\sing_1$.
\end{unremark}

Specializing to one dimension, the manifold $\sing_1$ is a 
union of topological circles.  The map $\dir : \sing_1 \to \R$ 
is evidently smooth, so it maps $\sing_1$ to a union of intervals.  
In all catalogued cases, in fact the range of $\dir$ is an interval,
so we have the following open question:
\begin{question}
Is it possible for the image of $\dir$ to be disconnected?
\end{question}
Because $\mu$ smoothly maps a union of circles to the real line,
the Jacobian of the map $\mu$ must vanish at least twice on each
circle.  Let $\SI$ denote the set of $\zz \in \sing_1$ for which
$\kappa (\zz) = 0$.  The cardinality of $\SI$ is not an invariant
(compare, for example, the example in Section~\ref{ss:computations}
with the first 4-chirality example on the web archive).  This has
the following interesting consequence.  Again, because the unitary 
group ${\mathcal U}_k$ is connected, by interpolation there must 
be some $U$ for which there is a double degeneracy in the
Jacobian of $\dir$.  This means that the Taylor series for $\log y$ 
on $\sing_1$ as a function of $\log x$ is missing not only its
quadratic term but its cubic term as well.  In a scaling window
of size $n^{1/2}$ near the peaks, it is shown in~\cite{bressler-pemantle}
that the amplitudes are asymptotic to an Airy function.  However,
with a double degeneracy, the same method shows a quartic-Airy limit 
instead of the usual cubic-Airy limit.  This may be the first
combinatorial example of such a limit and will be discussed in
forthcoming work. 

Let $W = \{ \ww^{(0)} , \ldots , \ww^{(t)} \}$ be a set of vectors 
in $\R^n$.  Say that $W$ is rationally degenerate if the set of 
$t$-tuples $( \rr \cdot (\ww - \ww^{(0}) )_{\ww \in W}$ is not dense 
in $(\R \mod 2 \pi)^t$ as $\rr$ varies over $\Z^n$.
Generic $t$-tuples are rationally nondegenerate because
degeneracy requires a number of linear relations to hold over 
the $2 \pi \Q$.  If $W$ is rationally nondegenerate, then the 
distribution on $t$-tuples $( \rr \cdot (\ww - \ww^{(0)} )_{\ww \in W}$ 
when $\rr$ is distributed uniformly over any cube of side $M$
in $\Z^d$ converges weakly to the uniform distribution on 
$(\Z \mod 2 \pi)^t$.  Let $\chi (\alpha_1 , \ldots , \alpha_t)$ 
denote the distribution of the square modulus of the sum of $t$ 
complex numbers chosen independently at random with moduli
$\alpha_1 , \ldots , \alpha_t$ and arguments uniform on
$[-\pi,\pi]$.
The following result now follows from the above 
discussion, Theorems~\ref{th:shape} and~\ref{th:asym}, 
and Proposition~\ref{pr:1-D}.  
\begin{pr} \label{pr:dim 1}
For any one-dimensional QRW, let $Q, Z(\rr)$ and $\kappa$ 
be as above.  Let $J$ be the image of $\sing_1$ under $\mu$.  
Let $\rr$ be any point of $J$ such that $\kappa (\zz) \neq 0$
for all $\zz \in Z(\rr)$ and $W := (1/i) \log Z(\rr)$ is 
rationally nondegenerate.  Then for any $\ee > 0$ there
exists an $M$ such that if $\rr (n)$ is a sequence of
integer vectors with $\rr^{(n)} / n \to \rr$, the 
empirical distribution of $n^d$ times the squared moduli 
of the amplitudes 
$$\{ a(i, j, n, \rr(n) + \xi) : 
   \xi \in \{ 0 , \ldots , M-1 \}^{d+1} \}$$
is within $\ee$ of the distribution $\chi (\alpha_1 , \ldots , \alpha_t)$
where $t = |Z(\rr)|$, $\{ \zz^{(j)} \}$ enumerates $Z(\rr)$, and
$\alpha_j = |P_{ij} (\zz^{(j)}) \kappa(\zz^{(j)})^{-1/2}|$. 
If $\rr \notin \overline{J}$, then the empirical distribution 
converges to a point mass at zero.
\end{pr}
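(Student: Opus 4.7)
The plan is to combine the local feasible-region expansion in Theorem~\ref{th:asym} with a Weyl equidistribution argument controlled by the rational nondegeneracy hypothesis, and then to dispatch the exterior case directly from Theorem~\ref{th:shape}.

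First, fix $\rr$ in the interior of $J$, let $Z(\rr) = \{\zz^{(1)},\ldots,\zz^{(t)}\}$, and take an integer sequence $\rr(n)$ with $\rr(n)/n \to \rr$. Because $\kappa(\zz^{(\ell)}) \neq 0$, part~(v) of Proposition~\ref{pr:1-D} says the Jacobian of $\dir$ is nonsingular at each $\zz^{(\ell)}$, so the implicit function theorem, applied on the smooth manifold $\sing_1$, produces, for every integer $\rr'$ with $\rr'/n$ sufficiently close to $\rr$, exactly $t$ preimages $\zz^{(\ell)}(\rr',n) \in Z(\rr'/n)$, each lying in a small neighborhood of $\zz^{(\ell)}$ and depending smoothly on $\rr'/n$. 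Plugging $\rr' = \rr(n)+\xi$, $\xi \in \{0,\ldots,M-1\}^d$, into Theorem~\ref{th:asym} gives
\[
a(i,j,n,\rr(n)+\xi) = n^{-d/2}\sum_{\ell=1}^{t} A_\ell(n,\xi)\, e^{i\omega_\ell(n,\xi)} + O\bigl(n^{-(d+1)/2}\bigr),
\]
where $A_\ell(n,\xi) \to \alpha_\ell$ uniformly in bounded $\xi$ as $n\to\infty$, and $\omega_\ell(n,\xi) = -(\rr(n)+\xi)\cdot \Arg \zz^{(\ell)}(\rr(n)+\xi,n) + \pi\tau_\ell/4$.

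The main step is to analyze the squared modulus as $\xi$ ranges over the cube. Factoring out $e^{i\omega_1(n,\xi)}$, the quantity $n^d\abs{a(i,j,n,\rr(n)+\xi)}^2$ depends only on the phase differences $\Delta_\ell(n,\xi) := \omega_\ell(n,\xi) - \omega_1(n,\xi)$, and smooth dependence of the preimages gives
\[
\Delta_\ell(n,\xi) = -\xi \cdot \bigl(\Arg \zz^{(\ell)} - \Arg \zz^{(1)}\bigr) + c_\ell(n) + o(1),
\]
where $c_\ell(n)$ is $\xi$-independent and the $o(1)$ error is uniform in bounded $\xi$. Rational nondegeneracy of $W = (1/i)\log Z(\rr)$ is exactly the statement that for no nonzero $\kk \in \Z^{t-1}$ does $\sum_\ell k_\ell(\Arg\zz^{(\ell)} - \Arg\zz^{(1)})$ lie in $(2\pi\Z)^d$, so Weyl's criterion yields equidistribution of $\xi \mapsto (\Delta_\ell(n,\xi) \bmod 2\pi)_{\ell=2}^{t}$ on the $(t-1)$-torus as $\xi$ sweeps out a cube of side $M\to\infty$, \emph{uniformly} in the shifts $c_\ell(n)$. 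Since $\chi(\alpha_1,\ldots,\alpha_t)$ is by construction the pushforward of the uniform measure on $(\R/2\pi\Z)^{t-1}$ under $(\theta_\ell) \mapsto \abs{\alpha_1 + \sum_{\ell\geq 2}\alpha_\ell e^{i\theta_\ell}}^2$, choosing $M$ large enough for Weyl and then $n$ large enough to dominate the remainder gives empirical $\epsilon$-closeness to $\chi(\alpha_1,\ldots,\alpha_t)$.

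For the last claim, if $\rr \notin \overline{J}$ then a compact neighborhood $K$ of $\rr$ lies in $\GG^c$, and Theorem~\ref{th:shape} gives $\abs{a(i,j,n,\rr(n)+\xi)}^2 = O(e^{-2cn})$ uniformly in $\xi$; after multiplication by $n^d$ the squared moduli tend uniformly to zero, so the empirical distribution converges to a point mass at $0$. The main obstacle is the uniformity in the shifts $c_\ell(n)$ appearing in the Weyl step, since these depend on $n$ in an uncontrolled way: handling this requires either a quantitative discrepancy bound for $\Z^d$-rotations of the torus (uniform in translates) or a compactness argument over the torus of possible shifts, after which a standard diagonal choice of $M$ and $n$ closes the estimate.
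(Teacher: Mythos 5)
Your overall strategy is the one the paper intends (the paper prints no proof; it simply asserts that the result follows from Theorems~\ref{th:shape} and~\ref{th:asym}, Proposition~\ref{pr:1-D}, and the equidistribution remarks immediately preceding the statement): apply Theorem~\ref{th:asym} at each of the $t$ points of $Z(\rr)$, note the moduli converge to the $\alpha_j$, reduce the squared modulus to the joint distribution of the phase differences, invoke Weyl/Kronecker equidistribution via rational nondegeneracy, and dispatch $\rr\notin\overline{J}$ by Theorem~\ref{th:shape}. The uniformity-in-shifts worry you flag at the end is actually a non-issue: the Weyl sums (Fourier coefficients) of a translated measure on the torus have the same modulus as those of the original, so closeness of the empirical phase measure to Haar measure is automatically uniform over the unknown shifts $c_\ell(n)$; no discrepancy bound ``uniform in translates'' is needed.

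The genuine gap is in how you use the nondegeneracy hypothesis. The set $W=(1/i)\log Z(\rr)$ lies in $\R^{d+1}$ (each $\zz\in Z(\rr)$ has $d$ spatial coordinates and the coordinate $y$), and rational nondegeneracy as defined in the paper is density of the tuples $(\xi\cdot(\ww-\ww^{(0)}))_{\ww\in W}$ as $\xi$ ranges over $\Z^{d+1}$, i.e.\ over shifts in space \emph{and} time; correspondingly the ensemble in the statement is indexed by a $(d+1)$-dimensional window, so the time coordinate is meant to be shifted along with $\rr(n)$. You instead fix $n$ and average only over spatial shifts $\xi\in\{0,\ldots,M-1\}^d$, and then assert that nondegeneracy ``is exactly'' the absence of integer relations among the spatial argument differences modulo $2\pi$. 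That is a strictly stronger condition than what is hypothesized: nothing in the hypothesis rules out two points of $Z(\rr)$ having equal $x$-arguments but different $y$-arguments, in which case your spatial-only phase differences are constant in $\xi$ and the Weyl step fails, while the stated hypothesis (which can exploit the time direction) still holds. The repair is to take the window in space-time, where the increment of the $\ell$-th phase under a unit lattice shift is $-(\ww^{(\ell)}-\ww^{(1)})$ in all $d+1$ coordinates, matching the hypothesis exactly. Relatedly, your expansion $\Delta_\ell(n,\xi)=-\xi\cdot(\Arg\zz^{(\ell)}-\Arg\zz^{(1)})+c_\ell(n)+o(1)$ does not follow from ``smooth dependence of the preimages'' alone: since the saddle points move with the direction $(\rr(n)+\xi)/n$, the naive expansion produces an additional $O(|\xi|)$ term of the form $\rr(n)\cdot D\Arg\xx^{(\ell)}\cdot\xi/n$ (plus its time analogue), and this cancels only because of the critical-point (Gauss-map) equation defining $Z(\cdot)$ --- an envelope-type computation that should be made explicit, since it is precisely what makes the phase increments equal to the vectors $\ww^{(\ell)}$ rather than perturbed ones.
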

\noproof

\begin{unremark}
Rational nondegeneracy becomes more difficult to check when the
size of $Z(\rr)$ increases, which happens when the number of
chiralities increases.  If one weakens the conclusion to 
convergence to some nondegenerate distribution with support
in $I := [0 , \sum |P_{ij} (\zz)^2 \kappa (\zz)^{-1}|]$, then one
needs only that not all components of all differences
$\log \zz - \log \zz'$ are rational, for $\zz , \zz' \in
Z(\rr)$.  For the purpose of qualitatively explaining the plots, 
this is good enough, though the upper envelope may be strictly
less than the upper endpoint of $I$ (and the lower envelope
may be strictly greater than zero) if there is rational degeneracy.
\end{unremark}

Comparing to figure~\ref{fig:n=1000}, we see that $J$ appears to
be a proper subinterval of $[-1,2]$, that there appears to be up to 
seven peaks which are local maxima of the probability profile.
These include the endpoints of $J$ (cf.\ the last conclusion of
Proposition~\ref{pr:1-D}) as well as several interior points, 
which we now understand to be places where the map $\mu$ folds
back on itself.  We now turn our attention to corroborating our
understanding of the picture by computing the number and locations 
of the peaks.

\subsection{Computations} \label{ss:computations}

Much of our computation is carried out symbolically in Maple.
Symbolic computation is significantly faster when the entries
of $U$ are rational, than when they are, say, quadratic
algebraic numbers.  Also, Maple sometimes incorrectly simplifies
or fails to simplify expressions involving radicals.  It is
easy to generate quadratically algebraic orthogonal or unitary 
matrices via the Gram-Schmidt procedure.  For rational matrices,
however, we turn to a result we found in~\cite{liebeck-osborne}.
\begin{pr} \label{pr:skew}
The map $S \mapsto (I + S) (I - S)^{-1}$ takes the skew symmetric
matrices over a field to the orthogonal matrices over the same field.
To generate unitary matrices instead, use skew-hermitian matrices $S$.
\end{pr}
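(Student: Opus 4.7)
The plan is to recognize this as the classical Cayley transform and verify it by a direct algebraic computation. The central observation, which is the only nontrivial ingredient, is that the matrices $I+S$ and $I-S$ always commute, since both $(I+S)(I-S)$ and $(I-S)(I+S)$ equal $I - S^2$. Consequently $(I+S)(I-S)^{-1} = (I-S)^{-1}(I+S)$ whenever $I-S$ is invertible. A preliminary remark is that the map is defined only on the set of $S$ for which $I-S$ is invertible; over $\R$ this is automatic for skew-symmetric $S$ (purely imaginary eigenvalues preclude $1 \in \mathrm{spec}(S)$), and over $\C$ it is automatic for skew-hermitian $S$ for the same reason. For a general field one simply restricts to the Zariski-open locus $\det(I-S) \neq 0$.

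To handle the orthogonal case, set $M := (I+S)(I-S)^{-1}$ and compute
$$M^T = \bigl[(I-S)^{-1}\bigr]^T (I+S)^T = (I - S^T)^{-1}(I + S^T) = (I+S)^{-1}(I-S),$$
using $S^T = -S$ in the last step. Then
$$M^T M = (I+S)^{-1}(I-S)(I+S)(I-S)^{-1},$$
and the commutation $(I-S)(I+S) = (I+S)(I-S)$ allows one to swap the two middle factors, giving $M^T M = I$.

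For the unitary case, the argument is word-for-word identical once one replaces the transpose by the conjugate transpose and uses $S^* = -S$ in place of $S^T = -S$; the same commutation $(I-S)(I+S) = (I+S)(I-S)$ furnishes $M^* M = I$.

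There is no real obstacle here: the proof is a routine manipulation, and the only subtlety worth flagging explicitly is the invertibility of $I-S$, which should be addressed in a brief opening remark rather than glossed over.
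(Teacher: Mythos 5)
Your proof is correct: it is the standard Cayley-transform verification, and the key points --- the commutation of $I+S$ and $I-S$, and the caveat that the map is only defined where $\det(I-S)\neq 0$ (automatic for real skew-symmetric and complex skew-hermitian $S$, but not over an arbitrary field) --- are exactly what is needed. The paper itself offers no proof, simply citing Liebeck and Osborne, so there is no alternative argument to compare against; your write-up would serve as a complete justification of the proposition as stated.
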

\noproof

The map in the proposition is rational, so choosing $S$ to be 
rational, we obtain orthogonal matrices with rational entries.  
In our running example, 
\begin{eqnarray*}
S = \left [ \begin{array}{cccc}  
   0 & -3 & -1 & 3 \\ 3  & 0 & 1 & -2 \\ 
   1 & -1 & 0 & 2 \\ -3 & 2 & -2 & 0 \end{array} \right ] \, ,
\end{eqnarray*}
leading to the matrix $U$ of equation~\eqref{eq:U}.

The example shows amplitudes for the transition from chirality~1
to chirality~1, so we need the polynomials $P_{11}$ and $Q$:
\begin{eqnarray*}
P_{11} (x,y) & = & \left( 27\,x-15\,y{x}^{3}-4\,yx+12\,{y}^{2}{x}^{3}
   -12\,y+4\,{y}^{2}{x}^{2}+9\,{y}^{2}-17\,{y}^{3}{x}^{2} \right) x \\
Q (x,y) & = & -17\,{y}^{3}{x}^{2}+9\,{y}^{2}+27\,x-12\,y+12\,{y}^{2}{x}^{3}
   +8\,{y}^{2}{x}^{2}-15\,y{x}^{3}-4\,{y}^{3}{x}^{3} \\
&& -15\,{y}^{3}x
   +12\,{y}^{2}x-4\, yx-17\,y{x}^{2}+9\,{y}^{2}{x}^{4}-12\,{y}^{3}{x}^{4}
   +27\,{y}^{4}{x}^{3}  \, .
\end{eqnarray*}
The curvature is proportional to the quantity
$$(- x \, Q_x - y \, Q_y) \, x \, Q_x \, y \, Q_y - x^2 \, y^2 \, 
   (Q_y^2 \, Q_{xx} + Q_x^2 \, Q_{xy} - 2 \, Q_x \, Q_y \, Q_{xy}) \, ,$$
where subscripts denote partial derivatives.  Evaluating this
leads to $xy$ times a polynomial $K(x,y)$ that is about half a
page in Maple~11.  The command 
\begin{flushleft}
\hspace{0.3in} {\tt Basis([$Q,K$] , plex ($y,x$));} 
\end{flushleft}
leads to a Gr\"obner basis, the first element of which is an
elimination polynomial $p(x)$, vanishing at precisely those
$x$-values for which there is a pair $(x,y) \in \sing$ for
which $\kappa (x,y) = 0$.  We may also verify that $Q$ is 
smooth by computing that the ideal generated by $[Q , Q_x , Q_y]$
has the trivial basis, $[1]$.  

To pass to the subset of roots of $p(x)$ that are on the unit
circle, one trick is as follows.  If $z = x + 1/x$ then
$x$ is on the unit circle if and only if $z$ is in the real
interval $[-2,2]$.  The polynomial defining $z$ is the 
elimination polyomial $q(z)$ for the basis $[p , 1 - zx + x^2]$.
Applying Maple's built-in Sturm sequence evaluator to $q$
shows symbolically that there are six roots of $z$ in $[-2,2]$.
This leads to six conjugate pairs of $x$ values.  The second
Gr\"obner basis element is a polynomial linear in $y$, so each
$x$ value has precisely one corresponding $y$ value.  The
$y$ value for $\overline{x}$ is the conjugate of the $y$ value
for $x$, and the function $\dir$ takes the same value at both
points of a conjugate pair.  Evaluating the $\dir$ function at
all six places leads to floating point expressions approximately
equal to
$$1.362766, 1.126013, 0.929248, 0.229537, -0.143835, -0.346306 \, .$$
\begin{figure}[ht]
\centering
\includegraphics[scale=0.75]{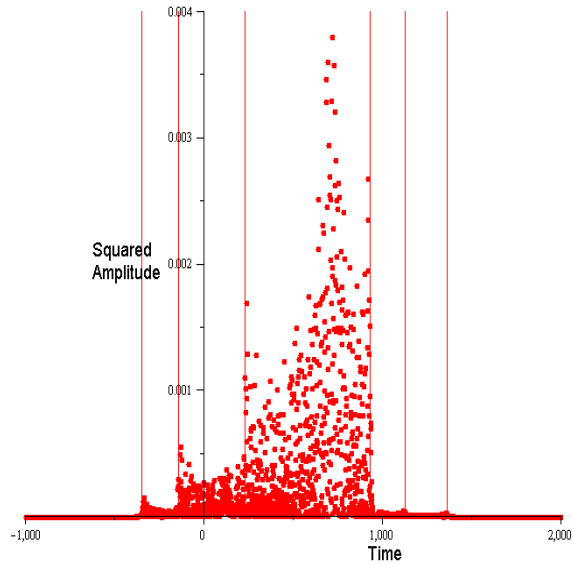}
\caption{probability profile with peaks drawn as vertical lines}
\label{fig:peaks}
\end{figure}
Drawing vertical lines corresponding to these six peak locations
leads to figure~\ref{fig:peaks}.

Surprisingly, the largest peak appearing in the data plot appears
to be missing from the set of analytically computed peak directions.
Simultaneously, some of the analytically computed peaks appear quite
small and it seems implausible that the probability profile
blows up there.  Indeed, this had us puzzled for quite a while.
In order to doublecheck our work, we plotted $y$ against $x$,
resulting in the plot in figure~\ref{sf:yx}, which should be 
interpreted as having periodic boundary conditions because $x$ 
and $y$ range over a circle.  This shows $\sing_1$ to be the union
of two circles, each embedded in $T^2$ so that the projection
$\pi$ onto $x$ has degree~2.  (Note: the projection onto $y$ has 
degree~1, and the homology class of the embedded circle is $(2,-1)$
in the basis generated by the $x$ and $y$ axes.)  We also plotted 
$\dir$ against $x$.  To facilitate computation, we used Gr\"obner 
bases to eliminate $y$ from $Q$ and $xQ_x - \dir y Q_y$, enabling 
us to plot solutions to a single polynomial.  The resulting plot 
is shown in figure~\ref{sf:dx}.
\begin{figure}[ht]
\centering
\subfigure[$y$ versus $x$]
{\includegraphics[scale=0.27]{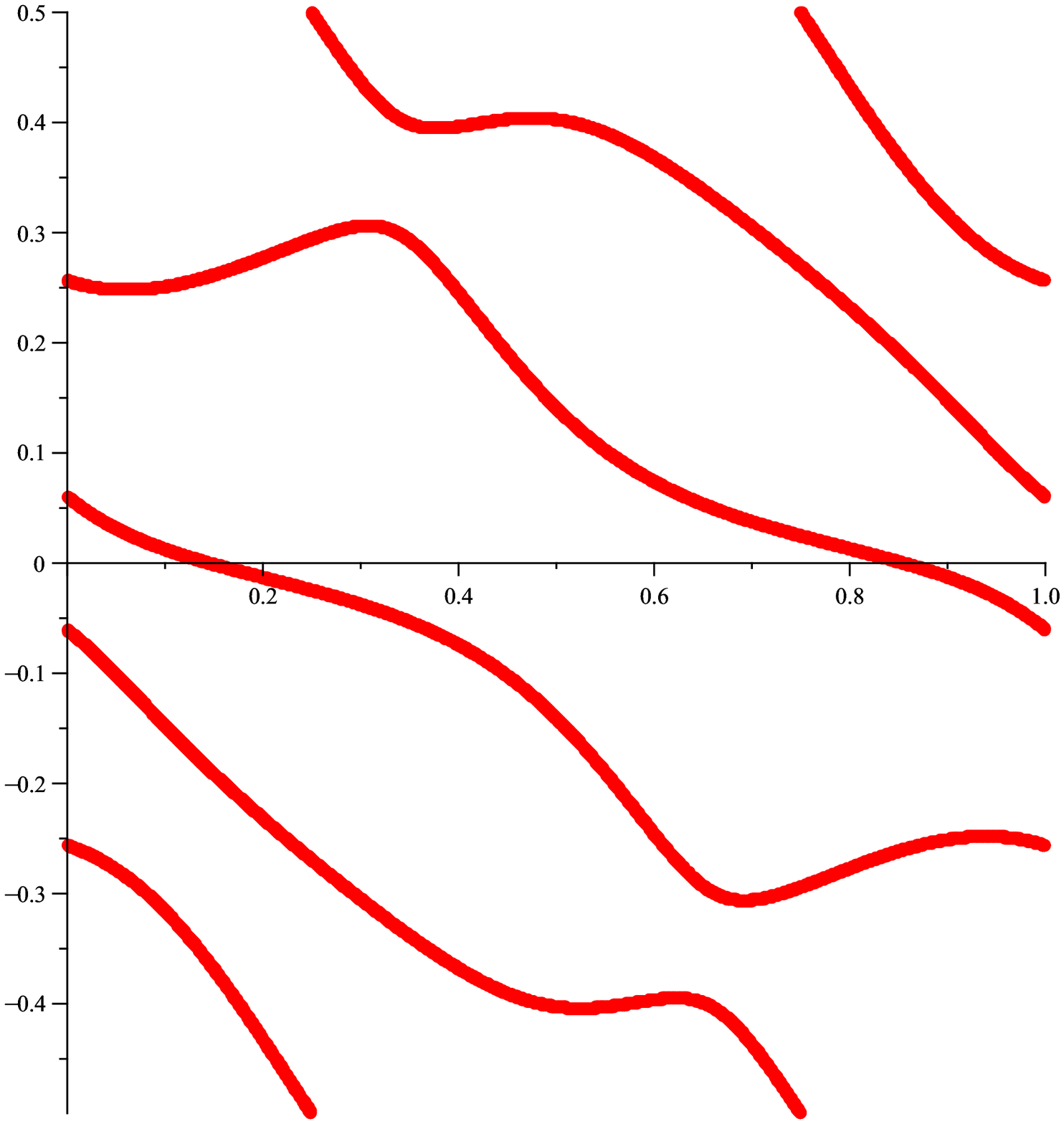} \label{sf:yx}}
\subfigure[$\dir$ versus $x$]
{\hspace{0.2in} \includegraphics[scale=0.27]{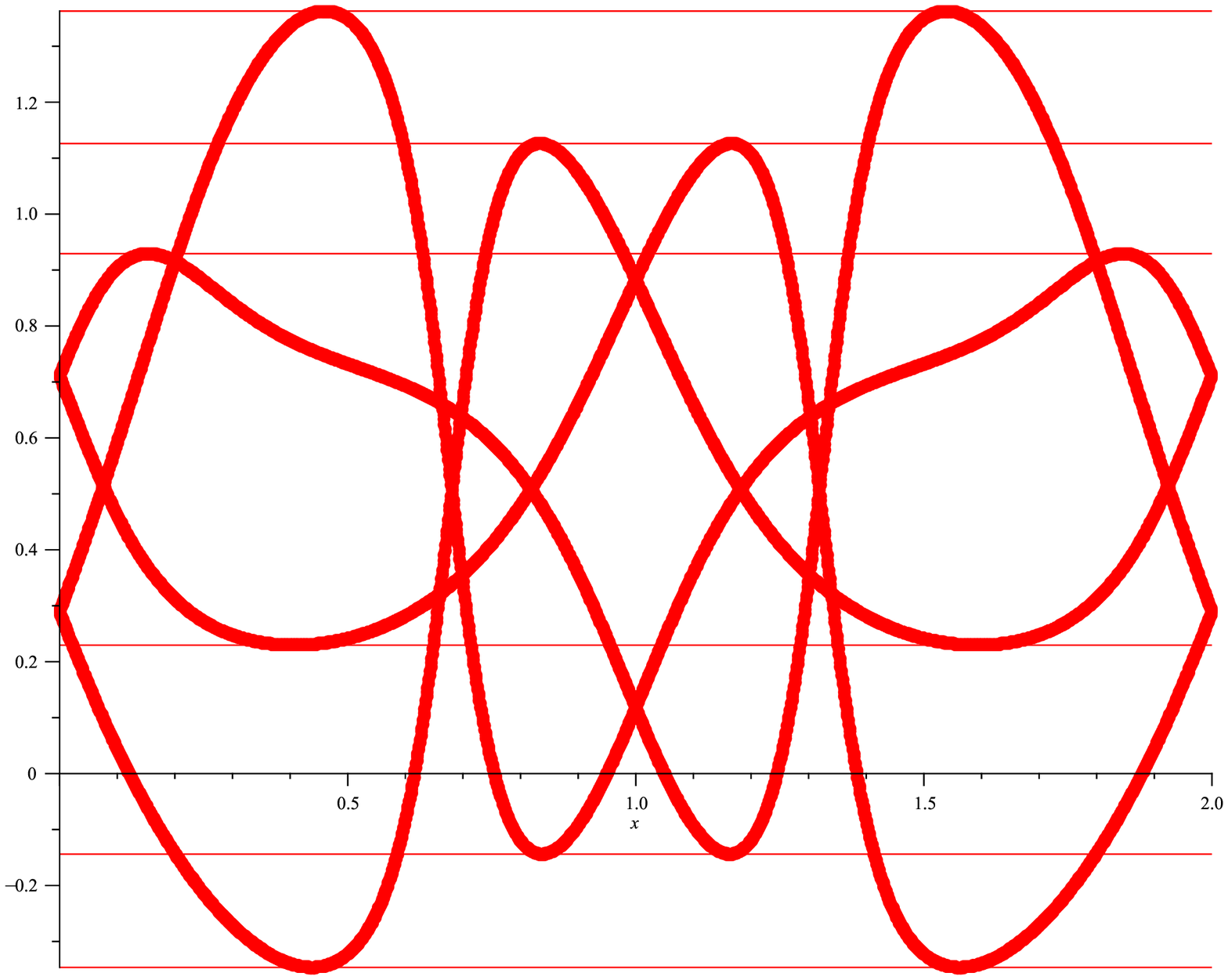} \label{sf:dx}}
\caption{Two interleaved circles and their images under the Gauss map}
\label{fig:poly plots}
\end{figure}

The last figure shows nicely how peaks occur at values where the
map $\dir$ backtracks.  The explanation of the appearance of the 
extra peak at $\dir \approx 0.7$ becomes clear if we compare plots at
$n=1,000$ and $n=10,000$.
\begin{figure}[ht]
\centering
\subfigure[$n=1000$]
{\includegraphics[scale=0.30]{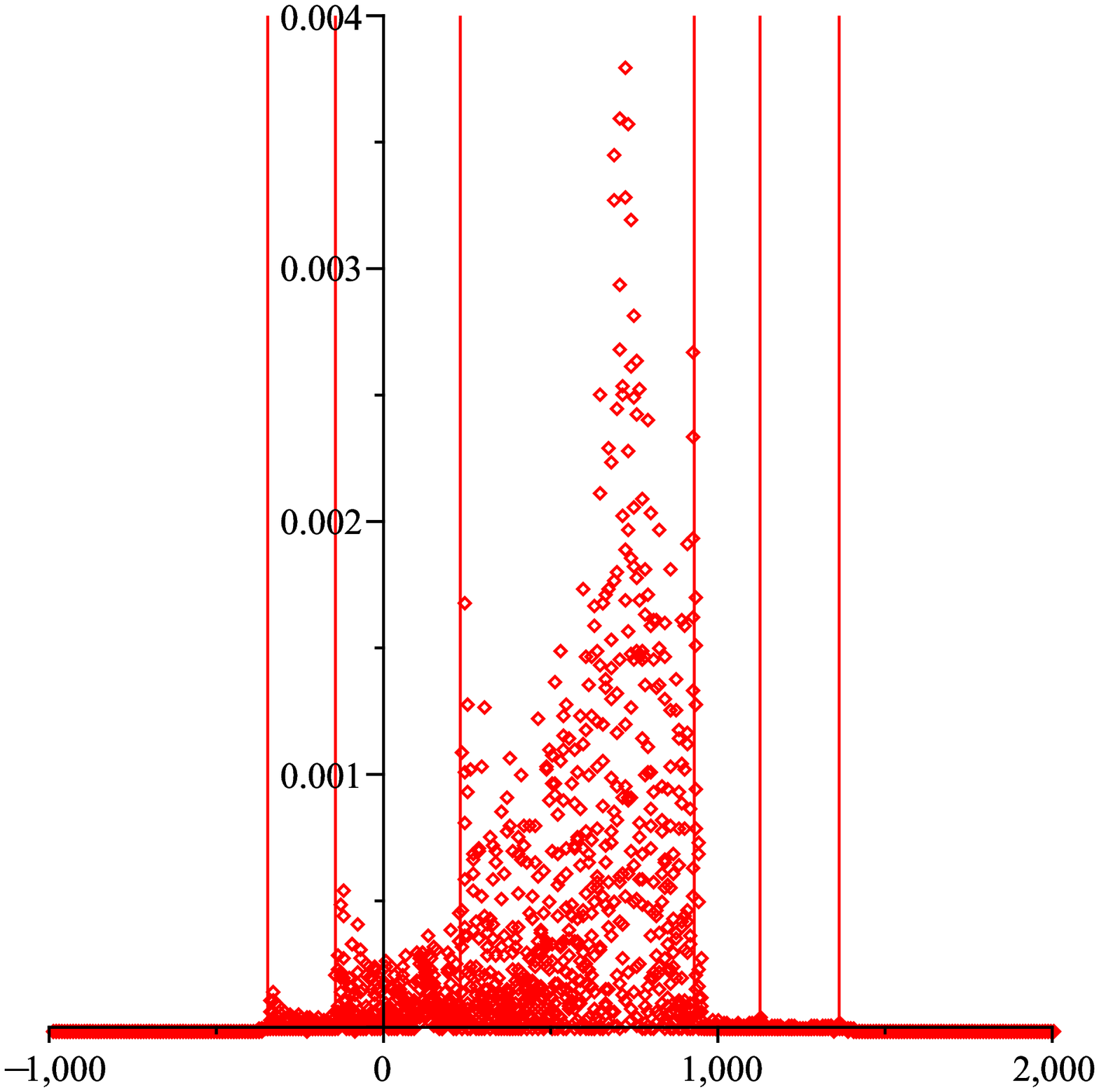} \label{sf:n1000}}
\subfigure[$n=10000$]
{\includegraphics[scale=0.30]{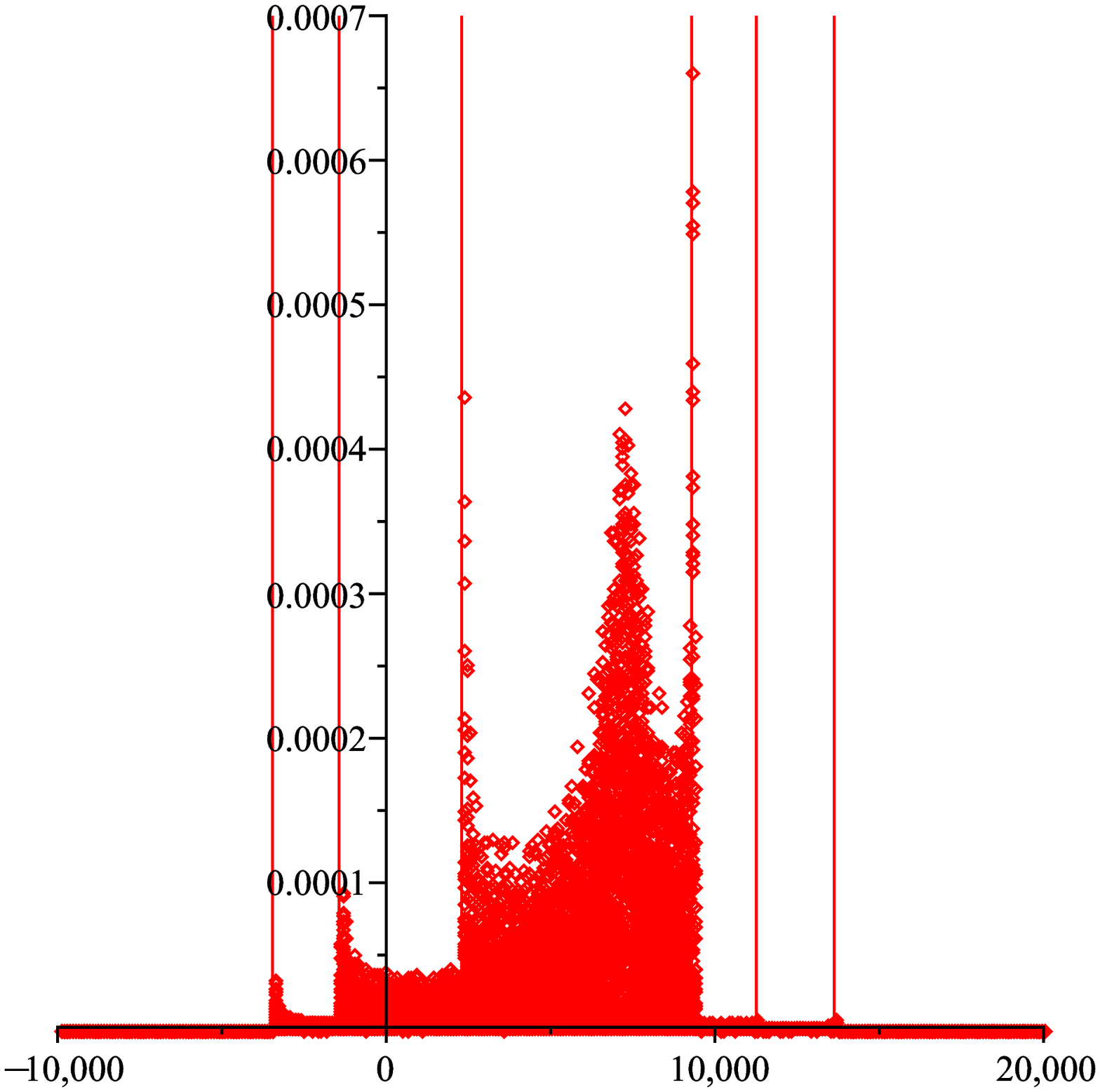} \label{sf:n10000}}
\caption{As $n \to \infty$, one peak scales down more rapidly}
\label{fig:compare}
\end{figure}
At first glance, it looks as if the extra peak is still quite
prominent, but in fact it has lowered with respect to the others.
To be precise, the false peak has gone down by a factor of 10, 
from $0.004$ to $0.0004$, because its probabilities scaled 
as $n^{-1}$.  The width of the peak also remained the same,
indicating convergence to a finite probability profile.  
The real peaks, however, have gone down by factors of $10^{2/3}$, 
as is shown to occur in the Airy scaling windows near directions 
$\rr$ where $\kappa (\zz) = 0$ for some $\zz \in Z(\rr)$.  When
the plot is vertically scaled so that the highest peak occurs
at the same height in each picture, the width above half the maximum
has shrunk somewhat, as must occur in an Airy scaling window, which
has width $\sqrt{n}$.  The location of the false peak is marked by 
a nearly flat spot in figure~\ref{sf:dx}, at height around $0.7$.
The curve stays nearly horizontal for some time, causing the false
peak to remain spread over a macroscopic rescaled region.

\section{Two-dimensional QRW}
\label{sec:2-D}

In this section we consider two examples of QRW with $d=2$, $k=4$
and steps $\vv^{(1)} = (0,0) , \vv^{(2)} = (1,0) , \vv^{(4)} = (0,1)$
and $\vv^{(4)} = (1,1)$.  To complete the specification of the two
examples, we give the two unitary matrices: 
\begin{eqnarray}
U_1 & := & \frac{1}{2} \left [ \begin{array}{cccc}  
   1 & 1 & 1 & 1 \\ -1 & 1 & -1 & 1 \\ 
   1 & -1 & -1 & 1 \\ -1 & -1 & 1 & 1 \end{array} \right ] \label{eq:U1} \\[2ex]
U_2 & := & \frac{1}{2} \left [ \begin{array}{cccc}  
   1 & 1 & 1 & 1 \\ -1 & 1 & -1 & 1 \\ 
   -1 & 1 & 1 & -1 \\ -1 & -1 & 1 & 1 \end{array} \right ] \, . \label{eq:U2}
\end{eqnarray}
Note that these are both Hadamard matrices; neither is the
Hadamard matrix with the bound state considered 
in~\cite{moore-domino}, nor is either in the two-parameter
family referred to as Grover walks in~\cite{konno08}.
The second differs from the first in that the signs in
the third row are reversed.  Both are members of one-parameter
families analyzed in~\cite{BBBP}, in Sections~4.1 and~4.3
respectively.  The (arbitrary) names given to these matrices 
in~\cite{brady-thesis,BBBP} are respectively $S(1/2)$ and $B(1/2)$.
Intensity plots at time 200 for these two quantum walks,
given in figure~\ref{fig:2-D}, reproduce those taken from~\cite{BBBP}
but with different parameter values ($1/2$ each time, instead
of $1/8$ and $2/3$ respectively).

\begin{figure}[ht]
\centering
\subfigure[$U_1$]
{\includegraphics[scale=0.40]{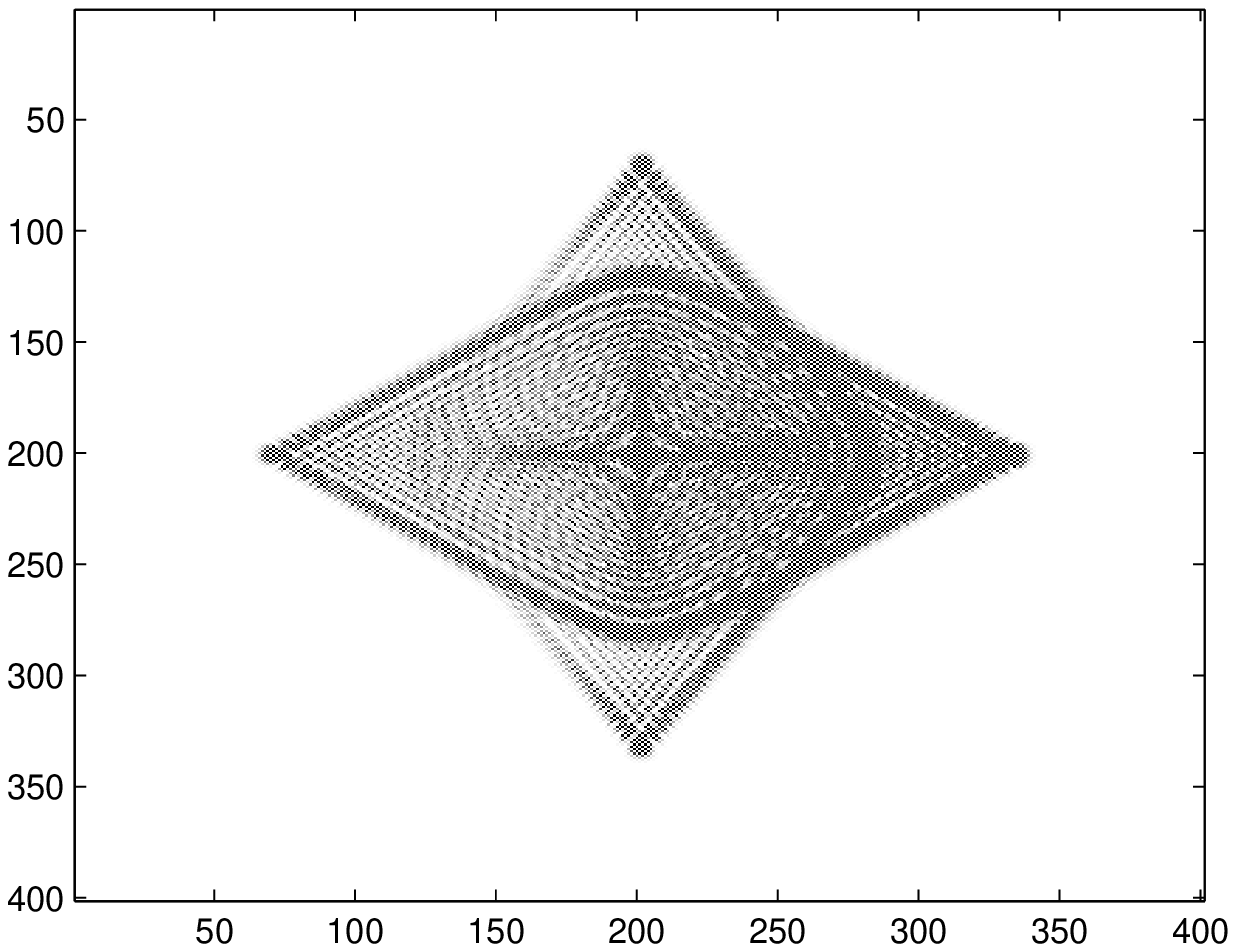} \label{sf:S}}
\subfigure[$U_2$]
{\includegraphics[scale=0.40]{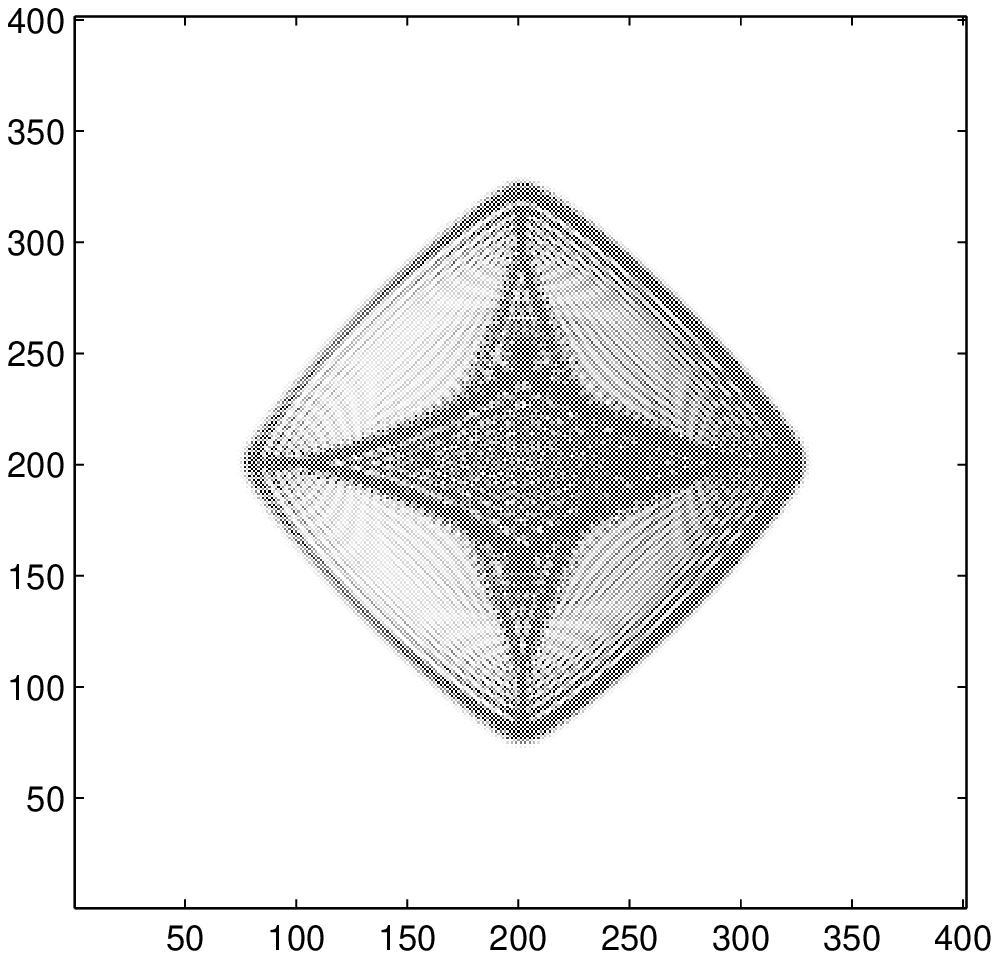} \label{sf:B}}
\caption{Time 200 probability profiles for two quantum walks:
the darkness at $(r,s)$ corresponds to the squared amplitude 
$|a(1,1,200,r,s)|^2$.}
\label{fig:2-D} 
\end{figure}

For the case of $U_1$ it is shown in~\cite[Lemma~4.3]{BBBP} that
$\sing_1$ is smooth.  Asymptotics follow, as in Theorem~\ref{th:asym}
of the present paper, and an intensity plot of the asymptotics
is generated that matches the empirical time 200 plot quite well.  
In the case of $U_2$, $\sing_1$ is not smooth but~\cite[Theorem~3.5]{BBBP}
shows that the singular points do not contribute to the asymptotics.
Again, a limiting intensity plot follows from Theorem~\ref{th:asym}
of the present paper and matches the time 200 profile quite well.  

It follows from Proposition~\ref{pr:dim 1} that the union of darkened 
curves where the intensity blows up is the algebraic curve where 
$\kappa$ vanishes, and that this includes the boundary of the feasible 
region.  The main result of this section is the identification of the 
algebraic curve.  While this result is only computational, it is one of 
the first examples of computation of such a curve, the only similar
prior example being the computation of the ``Octic circle'' boundary
of the feasible region for so-called diabolo tilings, identified without
proof by Cohn and Pemantle and first proved by~\cite{kenyon-okounkov} 
(see also~\cite{BP-fortress}).  The perhaps somewhat comical statement
of the result is as follows.

\begin{thm} \label{th:B}
For the quantum walk with unitary coin flip $U_2$,
the curvature of the variety $\sing_1$ vanishes at some
$\zz \in Z(r,s)$ if and only if $(r,s)$ is a zero of the
polynomial $P_2$ and satisfies $|r|+|s| < 3/4$, where

\noindent $ P_2 (r,s) :=
    1 + 14r^2 - 3126r^4 + 97752r^6 - 1445289r^8 + 12200622r^{10} - 64150356r^{12} + 220161216r^{14} - 504431361r^{16}
     + 774608490r^{18} - 785130582r^{20} + 502978728r^{22} - 184298359r^{24} +
    29412250r^{26}
+ 14s^2 - 1284r^2s^2 - 113016r^4s^2 + 5220612r^6s^2 - 96417162r^8s^2
+ 924427224r^{10}s^2 - 4865103360r^{12}s^2 + 14947388808r^{14}s^2 -
27714317286r^{16}s^2 + 30923414124r^{18}s^2 - 19802256648r^{20} s^2
+ 6399721524r^{22}s^2 - 721963550r^{24}s^2 - 3126s^4 - 113016r^2s^4
+ 7942218r^4s^4 - 68684580r^6s^4 - 666538860r^8s^4 + 15034322304
r^{10}s^4 - 86727881244r^{12}s^4 + 226469888328r^{14}s^4 -
296573996958r^{16}s^4 +
    183616180440r^{18}s^4 - 32546593518r^{20}s^4 -
    8997506820r^{22}s^4 + 97752s^6 + 5220612r^2
    s^6 - 68684580r^4s^6 + 3243820496r^6s^6 - 25244548160r^8
s^6 + 59768577720r^{10}s^6 - 147067477144r^{12}s^6 +
458758743568r^{14}s^6 - 749675452344r^{16}s^6 +
435217945700r^{18}s^6 - 16479111716r^{20}s^6 - 1445289s^8 -
96417162r^2s^8 - 666538860r^4s^8 - 25244548160r^6
    s^8 + 194515866042r^8s^8 - 421026680628r^{10}s^8 +
611623295476r^{12}s^8 - 331561483632r^{14}s^8 + 7820601831r^{16} s^8
+ 72391117294r^{18}s^8 + 12200622s^{10} + 924427224r^2s^{10} +
15034322304r^4s^{10} + 59768577720r^6s^{10} - 421026680628r^8
    s^{10} + 421043188488r^{10}s^{10} - 1131276050256r^{12}s^{10}
    - 196657371288r^{14}s^{10} + 151002519894r^{16}s^{10} -
64150356s^{12} - 4865103360r^2
    s^{12} - 86727881244r^4s^{12} - 147067477144r^6s^{12} +
611623295476r^8s^{12} - 1131276050256r^{10}s^{12} +
586397171964r^{12} s^{12} - 231584205720r^{14}s^{12} +
220161216s^{14} + 14947388808r^2s^{14} + 226469888328r^4s^{14} +
458758743568r^6s^{14} - 331561483632r^8s^{14} -
196657371288r^{10}s^{14} - 231584205720r^{12}s^{14} -
504431361s^{16} - 27714317286r^2s^{16} - 296573996958r^4s^{16} -
749675452344r^6s^{16} + 7820601831r^8s^{16} + 151002519894r^{10}
s^{16} + 774608490s^{18} + 30923414124r^2s^{18} + 183616180440r^4
s^{18} + 435217945700r^6s^{18} + 72391117294r^8s^{18} - 785130582
s^{20} - 19802256648r^2s^{20} - 32546593518r^4s^{20} - 16479111716
r^6s^{20} + 502978728s^{22} + 6399721524r^2s^{22} - 8997506820r^4
s^{22} - 184298359s^{24} - 721963550r^2s^{24} + 29412250s^{26}$.
\end{thm}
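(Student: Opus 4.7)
The plan is to reduce the theorem to an elimination problem in a computer algebra system and then verify that the elimination polynomial matches $P_2$. By Proposition~\ref{pr:1-D}(v), the curvature $\kappa(\zz)$ vanishes on $\sing_1$ precisely where the Jacobian of the logarithmic Gauss map $\dir$ vanishes, so it suffices to describe the image under $\dir$ of the algebraic subset of $\sing_1$ where that Jacobian vanishes. First I would expand $Q(x,y,t) := \det(I - t\, M(x,y)\, U_2)$ explicitly using~\eqref{eq:U2} and the step vectors $\vv^{(1)}=(0,0)$, $\vv^{(2)}=(1,0)$, $\vv^{(3)}=(0,1)$, $\vv^{(4)}=(1,1)$, obtaining a trivariate polynomial of small bidegree. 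I would then write down the curvature polynomial $K(x,y,t)$: following the two-dimensional analogue of the Hessian formula used in Section~\ref{ss:computations}, $K$ is the determinant of the Hessian of $\log t$ viewed as a local graphing function of $\log x$ and $\log y$ on $\sing_1$, cleared of denominators so as to be an integer polynomial in $x, y, t$ and the partial derivatives $Q_x, Q_y, Q_t, Q_{xx}, Q_{xy}, Q_{yy}, Q_{xt}, Q_{yt}, Q_{tt}$.

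Next I would encode the Gauss map. A point $\zz = (x,y,t) \in \sing_1$ maps under $\dir$ to $(r,s) \in \R^2$ if and only if
\[ x Q_x - r\, t Q_t = 0 \quad \text{and} \quad y Q_y - s\, t Q_t = 0, \]
using the identification $(a:b:c) \leftrightarrow (a/c, b/c)$ of $\CP^2$ with $\R^2$. Thus the set of $(r,s)$ for which some $\zz \in Z(r,s)$ has $\kappa(\zz) = 0$ is the image, under projection to the $(r,s)$-plane, of the variety in $\C^5$ cut out by the four polynomials $Q$, $K$, $x Q_x - r\, t Q_t$, and $y Q_y - s\, t Q_t$. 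I would compute a Gr\"obner basis of this ideal with the elimination order $\mathrm{plex}(x,y,t,r,s)$ and extract the generator $\tilde P(r,s)$ of the elimination ideal in $\Q[r,s]$.

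The last step is to match $\tilde P$ with $P_2$. A priori, $\tilde P$ may contain extraneous factors arising from complex critical points of $Q$ not on the unit torus, or introduced when clearing denominators in $K$. Factoring $\tilde P$ over $\Q$ and discarding factors whose real zero set lies outside the diamond $\{|r|+|s|<3/4\}$ --- which a posteriori bounds the component of the image $\dir[\sing_1]$ visible in Figure~\ref{sf:B} --- should leave exactly $P_2$, and one then verifies equality of polynomials coefficient by coefficient. The qualifier $|r|+|s|<3/4$ in the theorem statement is precisely the separation condition used to isolate the physical branch of the real algebraic curve $\{\tilde P = 0\}$ from spurious branches introduced by the elimination.

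The main obstacle is the scale of the Gr\"obner basis computation. The ideal has four generators in five variables, $K$ is already a substantial polynomial, and the elimination polynomial has degree $26$ in each of $r$ and $s$ with coefficients whose magnitude approaches $10^{12}$, so a direct call to \texttt{Basis} may be intractable. Controlling coefficient blowup will likely require staged elimination by iterated resultants --- first eliminating $t$ using $Q$ and one of the Gauss-map relations, then $y$, then $x$ --- and factoring the intermediate results to discard spurious components before carrying the remaining pieces to the next stage. Once the elimination polynomial is obtained, matching it to the displayed $P_2$ is a routine but nontrivial verification.
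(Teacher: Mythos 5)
Your setup coincides with the paper's: the same four polynomials $Q$, the curvature polynomial (your $K$, the paper's $L$), and the two Gauss-map relations $xQ_x - r\,tQ_t$, $yQ_y - s\,tQ_t$, with elimination carried out by staged, factored resultants after Gr\"obner bases fail to halt. Up to that point the proposal is essentially the paper's computation, including the need to discard spurious factors introduced by iterating resultants (the paper does this using the $r$--$s$ symmetry of the true image set, which your proposal could also invoke).

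The genuine gap is in your last step, the passage from the eliminated polynomial to the statement with the constraint $|r|+|s|<3/4$. The elimination is performed over all of $\sing$, not over $\sing_1$, so the zero set of $P_2$ genuinely contains real points with $|r|+|s|\ge 3/4$ coming from vanishing-curvature points of $\sing$ that do \emph{not} lie on the unit torus; and $P_2$ is irreducible over $\Q$, so the physical and unphysical parts of its real zero set are branches of one and the same irreducible curve and cannot be separated by ``discarding factors whose real zero set lies outside the diamond.'' Nor can the cutoff $3/4$ be justified ``a posteriori'' from the time-200 intensity plot: that is evidence, not proof. What is missing is an argument that the set $\Omega_0$ of points $(r,s)$ having a torus preimage with $\kappa=0$ is a union of connected components of the smooth locus of $\{P_2=0\}$. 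The paper supplies this by showing (via the realness of log-tangents on $\sing_1$ from \cite[Proposition~2.1]{BBBP} and a nondegeneracy condition on $\grad Q$, $\grad L$ and the differentials of the Gauss-map coordinates) that near a torus point the image of the zero-curvature curve is a nonsingular real arc, so membership in $\Omega_0$ is locally constant off the singular points of $\{P_2=0\}$; one then finishes by checking a single point on each component and verifying that no component crosses $|r|+|s|=3/4$. Without some argument of this kind your proof establishes only that the sought set is contained in the zero set of $P_2$, not the stated if-and-only-if with the diamond constraint.
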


We check visually that the zero set of $P_2$ does indeed coincide 
with the curves of peak intensity for the $U_2$ QRW.
\begin{figure}[ht]
\centering
\subfigure[probabilities at time 200 at $(r,s)$]
{\vspace{-0.6in} \includegraphics[scale=0.40]{familyB_1_2_Simb200} \label{sf:B 200}}
\subfigure[zero set of $P_2$ in $(r/n,s/n)$]
{\vspace{0.2in} \includegraphics[scale=0.24]{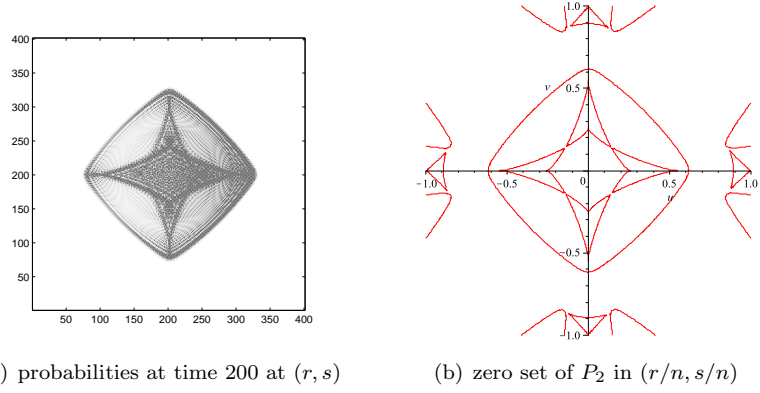} 
   \label{sf:B zero}}
\label{fig:QRW-Sim-Res-Comp-B-1-2} \caption{The probability profile
for the $U_2$ QRW alongside the graph of the zero set of $P_2$}
\end{figure}

Before embarking on the proof, let us be clear about what is requred.  
If $\rr$ is in the boundary of the feasible region, then $\kappa$ must 
vanish at the pre-images of $\rr$ in the unit torus.  The boundary, 
$\partial G$, of the feasible region is therefore a component of 
a real algebraic variety, $W$.  The variety $W$ is the image under 
the logarithmic Gauss map $\dir$ of the points of the unit torus
where $Q$ and $\kappa$ both vanish.  Computing this variety is
easy in principle: two algebraic equations in $(x,y,z,r,s)$
give the conditions for $\dir(x,y,z) = (r,s)$ and two more 
give conditions for $Q(x,y,z) = \kappa (x,y,z) = 0$; algebraically
eliminating $\{ x,y,z \}$ then gives the defining polynomial $P_2$
for $W$.  In fact, due to the number of variables and the degree 
of the polynomials, a straightforward Gr\"obner basis computation 
does not work and we need to use iterated resultants in order to get 
the computation to halt.  The last step is to discard extraneous 
real zeros of $P_2$, namely those in the interior of $G$ or $G^c$,
so as to arrive at a precise description of $\partial G$.
\clearpage

\begin{proof}
To eliminate subscripts, we use the variables $(x,y,z)$ instead of 
$(x_1 , x_2 , y)$.  The condition for $\zz \in Z(r,s)$ is given 
by the vanishing of two polynomials $H_1$ and $H_2$ in $(x,y,z,r,s)$,
where
\begin{eqnarray*}
H_1 (x,y,z,r,s) & := & x Q_x - r z Q_z  \, ; \\
H_2 (x,y,z,r,s) & := & y Q_y - s z Q_z  \, .
\end{eqnarray*}
The curvature of $\sing_1$ at $\zz$ also vanishes when a single
polynomial vanishes, which we will call $L(x,y,z)$.  While explicit 
formulae for $L$ may be well known in some circles, we include a 
brief derivation.  For $(x,y,z) \in \sing_1$, write $x = e^{iX},
y=e^{iY}$ and $z=e^{iZ}$.  By Proposition~\ref{pr:1-D} we know that
$Q_z \neq 0$ on $\sing_1$, hence the parametrization of $\sing_1$
by $X$ and $Y$ near a point $(x,y,z)$ is smooth and the partial
derivatives $Z_X, Z_Y, Z_{XX}, Z_{XY}, Z_{YY}$ are well defined.
Implicitly differentiating $Q(e^{iX} , e^{iY} , e^{i Z(X,Y)}) = 0$
with respect to $X$ and $Y$ we obtain
$$Z_X = - \frac{x Q_x}{z Q_z} \;\;\; \mbox{ and } \;\;\; 
   Z_Y = - \frac{y Q_y}{z Q_z} \, ,$$
and differentiating again yields
\begin{eqnarray*}
Z_{XX} & = & \frac{-i x z}{(z Q_z)^3} \left [ Q_x Q_z (z Q_z - 2xz Q_{xz} 
   + x Q_x) + xz (Q_x^2 Q_{zz} + Q_z^2 Q_{xx}) \right ] \, ;\\
Z_{YY} & = & \frac{-i y z}{(z Q_z)^3} \left [ Q_y Q_z (z Q_z - 2yz Q_{yz} 
   + z Q_y) + yz (Q_y^2 Q_{zz} + Q_z^2 Q_{yy}) \right ] \, ;\\
Z_{XY} & = & \frac{-i x y z}{(z Q_z)^3} \left [ z Q_z (Q_z Q_{xy} 
   - Q_x Q_{yz} - Q_y Q_{xz}) + Q_x Q_y Q_z + z Q_x Q_y Q_{zz} \right ] \, .
\end{eqnarray*}
In any dimension, the Gaussian curvature vanishes exactly when 
the determinant of the Hessian vanishes of any parametrization 
of the surface as a graph over $d-1$ variables.  In particular,
the curvature vanishes when 
$$\det \left ( \begin{array}{cc} Z_{XX} & Z_{XY} \\ Z_{XY} & Z_{YY} 
   \end{array} \right )$$
vanishes, and plugging in the computed values yields the polynomial 
\hfill \\ \\
$L(x,y,z) := -xyzQ_z^2Q_{xy}^2+zQ_xQ_z^2Q_y-2yzQ_xQ_zQ_yQ_{yz}
+yQ_xQ_zQ_y^2+yzQ_xQ_y^2Q_{zz}$  \\
$+\ yzQ_xQ_z^2Q_{yy}-2xzQ_xQ_zQ_{xz}Q_y
+2xyzQ_xQ_{xz}Q_yQ_{yz}-2xyzQ_xQ_zQ_{xz}Q_{yy}$ \\
$+\ xQ_x^2Q_zQ_y+xyQ_x^2
Q_zQ_{yy}+xzQ_x^2Q_{zz}Q_y+xyzQ_x^2Q_{zz}Q_{yy}+xzQ_{xx}Q_z^2Q_y$ \\
$-\ 2xyzQ_{xx}Q_zQ_yQ_{yz}+xyQ_{xx}Q_zQ_y^2+xyzQ_{xx}Q_y^2Q_{zz}+xyzQ_{xx}
Q_z^2Q_{yy}- xyzQ_y^2Q_{xz}^2$ \\
$-\ xyzQ_x^2Q_{yz}^2+2xyzQ_zQ_{xy}Q_xQ_{yz}
+2xyzQ_zQ_{xy}Q_yQ_{xz}-2xyQ_zQ_{xy}Q_xQ_y$ \\
$-\ 2xyzQ_{xy}Q_xQ_yQ_{zz}$. \hfill \\

It follows that the curvature of $\sing_1$ vanishes for some
$(x,y,z) \in Z(r,s)$ if and only if the four polynomials $Q, H_1, H_2$
and $L$ all vanish at some point $(x,y,z,r,s)$ with $(x,y,z) \in T^3$.
Ignoring the condition $(x,y,z) \in T^3$ for the moment, we see that
we need to eliminate the variables $(x,y,z)$ from the four equations,
leading to a one-dimensional ideal in $r$ and $s$.  Unfortunately 
Gr\"obner basis computations can have very long run times, with
published examples showing for example that the number of steps 
can be doubly exponential in the number of variables.  Indeed, we
were unable to get Maple to halt on this computation (indeed, on
much smaller computations).  The method of resultants, however,
led to a quicker elimination computation.  

\begin{defn}[resultant] \label{def:resultant}
Let $f(x) := \sum_{j=0}^\ell a_j x^j$ and $g(x) := \sum_{j=0}^m
b_j x^j$ be two polyomials in the single variable $x$, with 
coefficients in a field $K$.  Define the resultant $\result (f,g,x)$ 
to be the determinant of the $(\ell + m) \times (\ell + m)$ matrix
$$ \left( \begin{array}{cccccccc}
a_0 & & & & b_0 & & & \\
a_1 & a_0 & & & b_1 & b_0 & & \\
a_2 & a_1 & \ddots  &  & b_2 & b_1 & \ddots & \\
\vdots & a_2 & \ddots & a_0 & \vdots & b_2 & \ddots & b_0 \\
a_l & \vdots & \ddots & a_1 & b_m & \vdots & \ddots & b_1 \\
& a_l & \vdots & a_2 & & b_m & \vdots & b_2 \\
& & \ddots & \vdots & & & \ddots & \vdots \\
& & & a_l & & & & b_m
\end{array} \right) \, .$$
\end{defn}
The crucial fact about resultants is the following fact,
whose proof may be found in a number of places such 
as~\cite{CLO2,GKZ}:
\begin{equation} \label{eq:result}
\result (f,g,x) = 0 \; \Longleftrightarrow \; \exists x : \, 
   f(x) = g(x) = 0 \, . 
\end{equation}
Iterated resultants are not quite as nice.  For example, 
if $f,g,h$ are polynomials in $x$ and $y$, they may be viewed
as polynomials in $y$ with coefficients in the field of rational
functions, $K(x)$.  Then $\result (f,h,y)$ and $\result (g,h,y)$
are polynomials in $x$, vanishing respectively when the
pairs $(f,h)$ and $(g,h)$ have common roots.  The quantity
$$R := \result (\result (f,h,y) , \result (g,h,y) , x)$$
will then vanish if and only if there is a value of $x$ for
which $f(x,y_1) = h(x,y_1) = 0$ and $g(x,y_2) = h(x,y_2) = 0$.
It follows that if $f(x,y) = g(x,y) = 0$ then $R=0$, but
the converse does not in general hold.  A detailed
discussion of this may be found in~\cite{buse-mourrain}.

For our purposes, it will suffice to compute iterated resultants
and then pass to a subvariety where a common root indeed occurs.
We may eliminate repeated factors as we go along.  Accordingly, 
we compute
\begin{eqnarray*}
R_{12} & := & \rad (\result (Q,L,x)) \\
R_{13} & := & \rad (\result (Q,H_1,x)) \\
R_{14} & := & \rad (\result (Q,H_2,x))
\end{eqnarray*}
where $\rad(P)$ denotes the product of the first powers of
each irreducible factor of $P$.  Maple is kind to us because
we have used the shortest of the four polynomials, $Q$, in
each of the three first-level resultants.  Next, we eliminate 
$y$ via
\begin{eqnarray*}
R_{124} & := & \rad (\result (R_{12},R_{14},y)) \\
R_{134} & := & \rad (\result (R_{13},R_{14},y)) \, .
\end{eqnarray*}
Polynomials $R_{124}$ and $R_{134}$ each have several small 
univariate factors, as well as one large multivariate factor 
which is irreducible over the rationals.  Denote the large
factors by $f_{124}$ and $f_{134}$.  Clearly the univariate factors
do not contribute to the set we are looking for, so we eliminate $z$
be defining
$$R_{1234} := \rad (\result (f_{124}, f_{134},z)) \, .$$
Maple halts, and we obtain a single polynomial in the
variables $(r,s)$ whose zero set contains the set we
are after.  Let $\Omega$ denote the set of $(r,s)$ such that
$\kappa (x,y,z) = 0$ for some $(x,y,z) \in \sing$ with $\mu (x,y,z) = (r,s)$
[note: this definition uses $\sing$ instead of $\sing_1$.]
It follows from the symmetries of the problem that $\Omega$ 
is symmetric under $r \mapsto -r$ as well as $s \mapsto -s$ 
and the interchange of $r$ and $s$.  Computing iterated 
resultants, as we have observed, leads to a large zero set $\Omega'$;
the set $\Omega'$ may not possess $r$-$s$ symmetry, as this is broken
by the choice of order of iteration.  Factoring the iterated resultant,
we may eliminate any component of $\Omega'$ whose image under 
transposition of $r$ and $s$ is not in $\Omega'$.  Doing so, yields 
the irreducible polynomial $P_2$.  Because the set $\Omega$ is algebraic 
and known to be a subset of the zero set of the irreducible polynomial 
$P_2$, we see that $\Omega$ is equal to the zero set of $P_2$.  

Let $\Omega_0 \subseteq \Omega$ denote the subset of those $(r,s)$
for which as least one $(x,y,z) \in \dir^{-1} ((r,s))$ with 
$\kappa (x,y,z) = 0$ lies on the unit torus.  It remains to check 
that $\Omega_0$ consists of those $(r,s) \in \Omega$ with
$|r| + |s| < 3/4$.  

The locus of points in $\sing$ at which $\kappa$ vanishes 
is a complex algebraic curve $\gamma$ given by the 
simultaneous vanishing of $Q$ and $L$.  It is nonsingular
as long as $\grad Q$ and $\grad L$ are not parallel, in which
case its tangent vector is parallel to $\grad Q \times \grad L$.
Let $\rho := xQ_x / (zQ_z)$ and $\sigma := y Q_y / (z  Q_z)$ 
be the coordinates of the map $\dir$ under the identification 
of $\CP^2$ with $\{ (r,s,1) : r,s \in \C \}$.  
The image of $\gamma$ under $\dir$ (and this identification) is 
a nonsingular curve in the plane, provided that $\gamma$ is nonsingular 
and either $d\rho$ or $d\sigma$ is nonvanishing on the tangent.  For
this it is sufficient that one of the two determinants
$\det M_\rho , \det M_\sigma$ does not vanish, where the columns of
$M_\rho$ are $\grad Q, \grad L, \grad \rho$ and the columns of
$M_\sigma$ are $\grad Q, \grad L, \grad \sigma$.  

Let $(x_0,y_0,z_0)$ be any point in $\sing_1$ at which one
of these two determinants does not vanish.  It is shown
in~\cite[Proposition~2.1]{BBBP} that the tangent vector
to $\gamma$ at $(x_0,y_0,z_0)$ in logarithmic coordinates 
is real; therefore the image
of $\gamma$ near $(x_0,y_0,z_0)$ is a nonsingular real curve. 
Removing singular points from the zero set of $P_2$ leaves
a union ${\mathcal U}$ of connected components, each of which
therefore lies in $\Omega_0$ or is disjoint from $\Omega_0$.
The proof of the theorem is now reduced to listing the components,
checking that none crosses the boundary $|r|+|s| = 3/4$, and
checking $Z(r,s)$ for a single point $(r,s)$ on each component
(note: any component intersecting $\{ |r|+|s| > 1 \}$ need not
be checked as we know the coefficients to be identically zero here).
\end{proof}

We close by stating a result for $U_1$, analogous to Theorem~\ref{th:B}.
The proof is entirely analogous as well and will be omitted.
\begin{thm} \label{th:S}
For the quantum walk with unitary coin flip $U_1$,
the curvature of the variety $\sing_1$ vanishes at some
$(x,y,z) \in Z(r,s)$ if and only if $|r|$ and $|s|$ are both
at most $2/3$ and $(r,s)$ is a zero of the polynomial  \hfill 

\noindent $ P_1 (r,s) :=
    132019r^{16}+2763072s^2r^{20}-513216s^2r^{22}-6505200s^2r^{18}+256s^2r^2+8790436s^2r^{16}-10639416s^{10}r^8+
    39759700s^{12}r^4-12711677s^{10}r^4+4140257s^{12}r^2-513216s^{22}r^2-7492584s^2r^{14}+2503464s^{10}r^6-62208s^{22}+
    16s^6+141048r^{20}+8790436s^{16}r^2+2763072s^{20}r^2-6505200s^{18}r^2-40374720s^{18}r^6+64689624s^{16}r^4-33614784s^{18}r^4+
    14725472s^{10}r^10+121508208s^{16}r^8-1543s^{10}-23060s^2r^6+100227200s^{10}r^{12}+7363872s^{20}r^4-176524r^{18}+
    121508208s^8r^{16}-197271552s^8r^{14}-13374107s^8r^6+1647627s^8r^4+18664050s^8r^8-227481984s^{10}r^{14}-19343s^4r^4+
    279234496s^{12}r^{12}-67173440s^{14}r^4-7492584s^{14}r^2+4140257s^2r^{12}+291173s^2r^8-1449662s^2r^{10}+7363872s^4r^{20}-
    227481984s^{14}r^{10}+132019s^{16}-197271552s^{14}r^8-59209r^{14}-1449662s^{10}r^2+100227200s^{12}r^{10}-1543r^{10}-
    153035200s^{14}r^6-13374107s^6r^8+3183044s^6r^6+39759700s^4r^{12}-176524s^{18}+72718s^6r^4+1647627s^4r^8-62208r^{22}+
    141048s^{20}-1472s^4r^2+11664s^{24}-33614784s^4r^{18}+128187648s^{16}r^6-1472s^2r^4-67173440s^4r^{14}+291173s^8r^2+    64689624s^4r^{16}-10639416s^8r^{10}-59209s^{14}+72718s^4r^6+92321584s^8r^{12}-56r^8+92321584s^{12}r^8-153035200s^6r^{14}-    23060s^6r^2+128187648s^6r^{16}-40374720s^6r^{18}+72282208s^{12}r^6+14793r^{12}+11664r^{24}+14793s^{12}+16r^6+2503464s^6r^{10}-
    56s^8-12711677s^4r^{10}+72282208s^6r^{12}$.
\end{thm}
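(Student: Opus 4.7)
The plan is to follow the same resultant-elimination strategy developed in the proof of Theorem~\ref{th:B}, with $U_1$ replacing $U_2$. First I would form the denominator polynomial $Q_1(x,y,z) := \det(I - z M(x,y) U_1)$ from Lemma~\ref{lem:GF}, using the diagonal matrix $M(x,y) = \mathrm{diag}(1,x,y,xy)$ corresponding to the steps $\vv^{(1)}=(0,0),\; \vv^{(2)}=(1,0),\; \vv^{(3)}=(0,1),\; \vv^{(4)}=(1,1)$. Then I would assemble the auxiliary polynomials $H_1 := xQ_x - rzQ_z$, $H_2 := yQ_y - szQ_z$, and the curvature polynomial $L(x,y,z)$ obtained by substituting $Q_1$ into the explicit Hessian-determinant expression derived during the proof of Theorem~\ref{th:B}. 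The simultaneous vanishing of $Q_1, H_1, H_2, L$ at a point $(x,y,z,r,s)$ is precisely the condition that $\kappa$ vanishes at some $(x,y,z) \in Z(r,s)$.

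Next, I would run the identical cascade of iterated resultants, keeping $Q_1$ (the shortest of the four polynomials) as a participant in every first-stage elimination. Compute $R_{12} := \rad(\result(Q_1,L,x))$, $R_{13} := \rad(\result(Q_1,H_1,x))$, and $R_{14} := \rad(\result(Q_1,H_2,x))$; then eliminate $y$ via $R_{124} := \rad(\result(R_{12},R_{14},y))$ and $R_{134} := \rad(\result(R_{13},R_{14},y))$; isolate the large irreducible factors $f_{124}$ and $f_{134}$; and finally form $R_{1234} := \rad(\result(f_{124},f_{134},z))$. The zero set of $R_{1234}$ contains the locus $\Omega \subseteq \C^2$ of directions at which $\kappa$ vanishes somewhere on $\sing$. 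Using the $r \leftrightarrow s$ symmetry of $U_1$ (evident from its matrix structure together with the symmetric step set), I would discard every irreducible factor whose zero set is not stable under transposition, leaving the claimed polynomial $P_1$.

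The passage from the full algebraic locus $\Omega$ to the torus sublocus $\Omega_0$ parallels the closing argument for Theorem~\ref{th:B}. Removing the finitely many singular points from $\{P_1 = 0\}$ decomposes it into real-analytic components, and by~\cite[Proposition~2.1]{BBBP} each component lies entirely inside or entirely outside $\Omega_0$. Thus the proof reduces to selecting one sample $(r,s)$ per component, solving $Q_1 = H_1 = H_2 = L = 0$ numerically, and testing whether a solution $(x,y,z)$ satisfies $|x| = |y| = |z| = 1$. The square $|r|, |s| \le 2/3$ emerges as the envelope of the components that survive this test; components meeting $\max(|r|,|s|) > 1$ need not be tested since Theorem~\ref{th:shape} forces the amplitudes to vanish there.

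As in the $U_2$ case, the main obstacle is computational tractability of the elimination. A direct Gr\"obner basis computation on $\{Q_1, H_1, H_2, L\}$ will not halt in reasonable time, and even the iterated-resultant approach produces intermediate polynomials large enough that one must aggressively strip repeated and univariate factors between stages and exploit the ordering that keeps $Q_1$ in every first-level resultant. The secondary subtlety, inherited from the general theory of iterated resultants discussed in~\cite{buse-mourrain}, is that $R_{1234} = 0$ is only a necessary condition for a common complex root of the four polynomials; this is why the $r \leftrightarrow s$ symmetry filter at the end and the sample-point check on each component are both needed to pin down $P_1$ and the square $|r|, |s| \le 2/3$ exactly.
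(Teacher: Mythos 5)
Your proposal is correct and is essentially the paper's own argument: the paper omits the proof of Theorem~\ref{th:S}, stating only that it is entirely analogous to the proof of Theorem~\ref{th:B}, and your adaptation (forming $Q_1$, $H_1$, $H_2$, $L$, running the same cascade of iterated resultants with $\rad$ applied at each stage, filtering by the $r \leftrightarrow s$ symmetry, and then checking one sample point per nonsingular component against the torus condition and the boundary $|r|,|s| \le 2/3$) is exactly that analogous argument.
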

\noproof

\subsection*{Summary}

We have stated an asymptotic amplitude theorem for general 
one-dimensional quantum walk with an arbitrary number of 
chiralities and shown how the theoretical result corresponds, 
not always in an obvious way, to data generated at times of 
order several hundred to several thousand.  We have stated a 
general shape theorem for two-dimensional quantum walks.
The boundary is a part of an algebraic curve, and we have shown
how this curve may be computed, both in principle and in a Maple
computation that halts before running out of memory.

\bibliographystyle{amsalpha}
\bibliography{RP}

\end{document}